\numberwithin{equation}{section} 
\newtheorem{theorem}{Theorem} 
\newtheorem{proposition}[theorem]{Proposition} 
\newtheorem{cor}[theorem]{Corollary}
\theoremstyle{definition} 
\newtheorem{definition}[theorem]{Definition} 
\theoremstyle{definition} 
\newtheorem{remark}[theorem]{Remark} 
\newcounter{FNC}[page] 
\def\fauxfootnote#1{{\addtocounter{FNC}{2}$^\fnsymbol{FNC}$% 
     \let\thefootnote\relax\footnotetext{$^\fnsymbol{FNC}$#1}}} 
\newcommand{\C}{{\mathbb{C}}} 
\renewcommand{\P}{{\mathbb{P}}}
\newcommand{\co}{{\;:\;}} 
\newcommand{\be}{{\bf e}}
\newcommand{\diag}{{\mbox{\rm diag}}} 
\newcommand{\tr}{{\mbox{\rm Tr}}}
\newcommand{\DeCo}[1]{\Blue{#1}} 
\newcommand{\demph}[1]{\DeCo{{\sl #1}}} 
\title{Complex static skew-symmetric output feedback control} 
\author{Christopher J. Hillar} 
\address{Christopher J. Hillar\\The Mathematical Sciences Research Institute\\ 
         17 Gauss Way\\ 
         Berkeley, CA 94720-5070}%\\         USA} 
\email{chillar@msri.org} 
\urladdr{http://www.msri.org/people/members/chillar/} 
\author{Frank Sottile} 
\address{Frank Sottile \\ Department of Mathematics\\ 
         Texas A\&M University\\ 
         College Station\\ 
         Texas \ 77843\\ 
         USA} 
\email{sottile@math.tamu.edu} 
\urladdr{http://www.math.tamu.edu/\~{}sottile/} 
\subjclass{93B55 (14M15 93B27)} 
\thanks{Hillar supported in part by an NSF Postdoctoral Fellowship and an NSA 
  Young Investigator grant}  
\thanks{Sottile supported in part by the Institute for Mathematics and its Applications,  
    Institut Mittag-Leffler, and NSF grants DMS-0701050 and DMS-1001615.}   
\keywords{pole placement, feedback control, orthogonal Grassmannian, Lagrangian 
  Grassmannian, skew-symmetric matrix}  
\begin{document} 
%%%%%%%%%%%%%%%%%%%%%%%%%%%%%%%%%%%%%%%%%%%%%%%%%%%%%%%%%%%%%%%%%%%%%%%%%% 
 
%%%%%%%%%%%%%%%%%%%%%%%%%%%%%%%%%%%%%%%%%%%%%%%%%%%%%%%%%%%%%%%%%%%%%%%%%% 
\begin{abstract} 
 We study the problem of feedback control for skew-symmetric and skew-Hamiltonian transfer 
 functions using skew-symmetric controllers. 
 This extends work of Helmke, et al., who studied static symmetric 
 feedback control of symmetric and Hamiltonian linear systems. 
 We identify spaces of linear systems with symmetry as natural subvarieties 
 of the moduli space of rational curves in a Grassmannian, 
 give necessary and sufficient conditions for pole placement by static  
 skew-symmetric complex feedback, and use Schubert calculus for the orthogonal  
 Grassmannian to count the number of complex feedback laws when there are finitely many
 of them. 
 Finally, we also construct a real skew-symmetric linear system 
 with only real feedback for any set of real poles.  
\end{abstract} 
%%%%%%%%%%%%%%%%%%%%%%%%%%%%%%%%%%%%%%%%%%%%%%%%%%%%%%%%%%%%%%%%%%%%%%%%%%% 
\maketitle 
 
%%%%%%%%%%%%%%%%%%%%%%%%%%%%%%%%%%%%%%%%%%%%%%%%%%%%%%%%%%%%%%%%%%%%%%%%%%%%% 
% 
% 
\section{Introduction} 
Many fundamental questions about output feedback pole assignment for general linear systems 
have been answered by appealing to algebraic geometry, and more specifically 
to the geometry of Grassmann manifolds. 
This body of work has led to important contributions in systems theory: 
Hermann and Martin gave necessary and sufficient conditions for complex static output  
feedback control~\cite{HM1, HM2, HM3}, Brockett and Byrnes used Schubert calculus to count 
the number of pole-assigning feedback laws~\cite{BB81}, and then Rosenthal~\cite{Ro94} and 
Ravi, Rosenthal, and Wang~\cite{RRW1,RRW2} solved these problems for complex dynamic 
compensators using quantum Schubert calculus. 
For a description of the earlier literature, we recommend~\cite{By89}. 
This line of work on complex feedback has led to a solution of the problem of pole-assignment in the 
real case, some of which is found in~\cite{EG,RSW,W92}. 
Likewise, it has influenced work in algebraic geometry~\cite{HV,So00b,SS}, some of 
which is surveyed in~\cite{So01}. 
 
The \demph{Lagrangian Grassmannian} and \demph{orthogonal Grassmannian} are subsets of the 
usual Grassmannian, and in principle they should also appear in systems theory. 
This was realized by Helmke, Rosenthal, and 
Wang~\cite{HRW06}, who studied the control of linear systems with symmetric and Hamiltonian  
state-space realizations by static symmetric output feedback. 
They gave necessary and sufficient conditions for pole placement by static symmetric complex  
feedback, linking this problem to the Schubert calculus on Lagrangian Grassmannians, and then used  
this link to count the number of complex feedback laws  
when there are finitely many of them.  
 
We continue this line of research. 
We first identify spaces of linear systems  
with a natural symmetry as certain  
subvarieties of the space of rational curves in a Grassmannian. 
More specifically, we consider linear systems with McMillan degree $n$  
whose transfer function $G(s)$ (a square matrix of rational functions which is 
defined at $s=\infty$) has one of the following four symmetries: 
 \begin{enumerate} 
  \item $G(s)^T=G(s)$  \quad symmetric, 
  \item $G(s)^T=G(-s)$  \quad Hamiltonian  ($n$ must be even),  
  \item $G(s)^T=-G(-s)$ \quad skew-Hamiltonian, and  
  \item $G(s)^T=-G(s)$  \quad skew-symmetric  ($n$ must be even). 
 \end{enumerate} 
Symmetries (1)-(2) were studied in~\cite{HRW06} and occur naturally in 
systems theory~\cite{BD,F83}. 
Stabilization of symmetric systems (1) by real symmetric output feedback was 
considered in~\cite{MaHe}, where it was shown that there may be no real feedback 
laws placing $n$ real poles when $n\geq m$. 
The symmetries (3)-(4) are natural to consider from the point of view of algebraic 
geometry.   
Theorem~\ref{Th:three} gives an example of a real $m$-input $m$-output skew-symmetric 
linear system of McMillan degree $2\binom{m}{2}$ such that every feedback law is real when 
placing real poles, demonstrating that it is feasible to place real
poles with real skew-symmetric feedback.  
 
Let $A$ be a nondegenerate bilinear form on $\C^{2m}$. 
The annihilator \DeCo{$H^{\perp_A}$} of a plane $H$ in $\C^{2m}$ is  
     the set of $v\in\C^{2m}$ such that $A(v,w)=0$ for all $w\in H$. 
The annihilator of an $m$-plane in $\C^{2m}$ is  
also an $m$-plane, and so the association $H\mapsto H^{\perp_A}$ defines an involution \DeCo{$\iota_A$} on the Grassmannian 
\demph{$G(m,2m)$}.  
If $A$ is skew-symmetric, then the set of fixed points of $\iota_A$ is the  
\demph{Lagrangian Grassmannian, $LG(m)$}, which is a manifold of dimension 
$\binom{m{+}1}{2}$. 
If $A$ is symmetric, then the set of fixed points of $\iota_A$ has two isomorphic 
components, either of which forms the $\binom{m}{2}$-dimensional  
\demph{orthogonal Grassmannian, $OG(m)$}, also called the \demph{spinor variety}~\cite{FuPr}. 
 
It is classical (e.g., proved by Gaussian elimination \cite{BW66}) that any two invertible complex symmetric matrices $A,B$ are (transpose) \textit{congruent}:  there exists an invertible matrix $X$ such that $X^{\top}AX = B$. 
Similarly, any two invertible complex skew-symmetric matrices are congruent.  
Thus, we will always assume that our forms are $\langle x,y\rangle= x^{\top}Ay$, where $A$ is either 
 \begin{equation}\label{Eq:forms} 
   \DeCo{O_{2m}}\ :=\ \left[\begin{matrix}0&I_m\\ I_m&0\end{matrix}\right] 
   \qquad\mbox{or}\qquad 
   \DeCo{J_{2m}}\ :=\ \left[\begin{matrix}0&I_m\\-I_m&0\end{matrix}\right]\,, 
 \end{equation} 
where $I_m$ is the $m\times m$ identity matrix. 
We omit the subscripts $m$ and $2m$ when the dimensions are clear from context. 
A general linear subspace $H\in G(m,2m)$ is the row space of a matrix of the form 
$[I_m \co F]$, where $F$ is an $m\times m$ matrix. 
A calculation shows that $\iota_J(H)$ is spanned by $[I_m\co F^T]$ and  
$\iota_O(H)$ is spanned by $[I_m\co -F^T]$, so that $H\in LG(m)$  
if and only if $F$ is symmetric and $H\in OG(m)$ if and only if $F$  
is skew-symmetric. 
 
We reach the same conclusion for $H$ of the form $[F\co I_m]$. 
Such isotropic planes form dense open subsets of $LG(m)$ and $OG(m)$. 
 
If we associate an $m$-input $m$-output proper transfer function $G(s)$ of McMillan 
degree $n$ to the row space of the matrix 
\[ 
   [I_m\ :\ G(s)]\,, 
\] 
we obtain a map $\gamma\colon \P^1\to G(m,2m)$ of degree $n$, where $\P^1$ is the 
  complex projective line. 
The image is the \demph{Hermann-Martin curve}~\cite{HM3} of $G(s)$, which we will identify 
  with $G(s)$. 
The set of all such proper transfer functions forms a dense open subset in the space 
of rational curves of degree $n$ in the Grassmannian $G(m,2m)$~\cite{Ro94}. 
Our first main result identifies sets of transfer functions with symmetries as natural 
subvarieties of the space of rational curves in the Grassmannian. 
 
%%%%%%%%%%%%%%%%%%%%%%%%%%%%%%%%%%%%%%%%%%%%%%%%%%%%%%%%%%%%%%%%%%%%%%%%%% 
\begin{theorem}\label{Th:one} 
 The following hold over the ground field $\C$. 

$(1)$  The set of symmetric  linear systems with $m$ inputs and $m$ outputs of McMillan degree 
  $n$ is an irreducible quasiprojective manifold of dimension $(m{+}1)n + \tbinom{m{+}1}{2}$. 
  It is a dense open  
  subset of the space of rational curves of degree $n$ in $LG(m)$.  
 
$(2)$ The set of Hamiltonian linear systems with $m$ inputs and $m$ outputs of even McMillan 
  degree $n$ is an irreducible quasiprojective  manifold of dimension $mn + \tbinom{m{+}1}{2}$. 
  It is a  subset 
  of the space of rational curves $\gamma$ in the Grassmannian $G(m,2m)$ that satisfy: 
 \begin{equation}\label{Eq:Hamil_curves} 
    \gamma(-s)\ =\ \iota_J(\gamma(s))\,. 
 \end{equation} 
 
$(3)$ The set of skew-Hamiltonian linear systems with $m$ inputs and $m$ outputs of McMillan 
  degree $n$ is an irreducible quasiprojective  manifold of dimension $mn + \tbinom{m}{2}$. 
  It is a subset 
  of the space of rational curves $\gamma$ in the Grassmannian $G(m,2m)$ that satisfy: 
 \begin{equation}\label{Eq:Sk_Hamil_curves} 
    \gamma(-s)\ =\ \iota_O(\gamma(s))\,. 
 \end{equation} 
 
$(4)$ The set of skew-symmetric linear systems with $m$ inputs and $m$ outputs of even 
  McMillan degree $n=2\ell$ is an irreducible quasiprojective  manifold of dimension $(m{-}1)n + \tbinom{m}{2}$. 
  It is naturally a dense open 
  subset of the space of rational curves of degree $\ell$ in 
  $OG(m)$.  
\end{theorem}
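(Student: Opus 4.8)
\emph{Overview.} The plan is to realize each of the four symmetry classes by a special minimal state-space realization, count parameters modulo the appropriate classical group, and then match the outcome against the curve-space descriptions. Write $G(s)=D+C(sI-A)^{-1}B$ for a minimal realization with $A\in\Mat_n$, $B\in\Mat_{n\times m}$, $C\in\Mat_{m\times n}$ and $D=G(\infty)$; recall that such a realization is unique up to $(A,B,C)\mapsto(TAT^{-1},TB,CT^{-1})$ with $T\in GL_n(\C)$. The transpose $G(s)^{T}$ is realized by $(A^{T},C^{T},B^{T},D^{T})$ and $G(-s)$ by $(-A,B,-C,D)$. For each of the four symmetries I would equate the two resulting minimal realizations of the same function, obtaining an invertible intertwiner $T$; matching the input and output matrices shows $T^{T}=\pm T$, and matching $D$ shows $D^{T}=\pm D$, where the signs are dictated by the symmetry.

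\emph{Balanced realizations and the dimension count.} Normalizing $T$ by congruence to $I$, $O$, or $J$ gives, in the four cases: (1) $A=A^{T}$, $C=B^{T}$, $D=D^{T}$; (2) $AJ$ symmetric, $C=-B^{T}J^{-1}$, $D=D^{T}$; (3) $AO$ skew-symmetric, $C=B^{T}O^{-1}$, $D=-D^{T}$; (4) $AJ$ skew-symmetric, $C=-B^{T}J^{-1}$, $D=-D^{T}$. In every case $C$ is determined by $B$, the matrix $A$ ranges over a linear space (of dimensions $\tbinom{n+1}{2},\tbinom{n+1}{2},\tbinom{n}{2},\tbinom{n}{2}$ respectively), $B$ is free ($nm$ parameters), and $D$ ranges over the symmetric or skew matrices ($\tbinom{m+1}{2}$ or $\tbinom{m}{2}$), subject to the open minimality condition. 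Two balanced realizations of the same $G$ differ by an element of the stabilizer of $T$, namely $O_n$ (cases 1,3, dimension $\tbinom{n}{2}$) or $Sp_n$ (cases 2,4, dimension $\tbinom{n+1}{2}$), acting freely because minimal realizations have trivial $GL_n$-stabilizer. Subtracting the group dimension from the parameter count yields exactly $(m{+}1)n+\tbinom{m+1}{2}$, $mn+\tbinom{m+1}{2}$, $mn+\tbinom{m}{2}$, and $(m{-}1)n+\tbinom{m}{2}$.

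\emph{Irreducibility, smoothness, and the curve-space descriptions.} Each parameter space is an open subset of an affine space, hence irreducible, and the set of transfer functions with the given symmetry is its image under the (algebraic) Hermann--Martin map, so is irreducible; since the fibers are single free orbits, it is a smooth quasiprojective manifold of the computed dimension. To pin down the curve-space statements I would use the formulas for $\iota_J,\iota_O$ on $[I_m\co F]$: a symmetric $G$ has $\gamma$ fixed pointwise by $\iota_J$, hence lying in $LG(m)$, while a skew-symmetric $G$ has $\gamma$ fixed pointwise by $\iota_O$, hence lying in $OG(m)$; the same computation gives the equivariances $\gamma(-s)=\iota_J(\gamma(s))$ and $\gamma(-s)=\iota_O(\gamma(s))$ in cases (2) and (3). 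Since $LG(m)$ and $OG(m)$ are cominuscule homogeneous spaces, hence convex, their spaces of rational curves of fixed degree are smooth and irreducible of dimension equal to $\dim$ plus index times degree. For $LG(m)$ (dimension $\tbinom{m+1}{2}$, index $m{+}1$) a degree-$n$ curve gives $(m{+}1)n+\tbinom{m+1}{2}$, matching (1). For $OG(m)$ the Pl\"ucker bundle of $G(m,2m)$ restricts to the square of the spinor generator, so a curve of Pl\"ucker degree $n$ has spinor degree $\ell=n/2$ (forcing $n$ even); with $\dim=\tbinom{m}{2}$ and index $2m{-}2$ this gives $(2m{-}2)\ell+\tbinom{m}{2}=(m{-}1)n+\tbinom{m}{2}$, matching (4). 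Equality of dimensions, together with openness of the properness and McMillan-degree conditions, upgrades the inclusions to dense open subsets.

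\emph{Main obstacle.} The crux is the realization theorem invoked in the first step: that every transfer function with a prescribed symmetry admits a balanced minimal realization of the stated form, and that two such differ by $O_n$ or $Sp_n$ rather than merely by $GL_n$. I expect to obtain this by applying uniqueness up to $GL_n$ to the intertwiner $T$, verifying its symmetry type, and normalizing it by congruence of complex symmetric or skew forms. A secondary point needing care is that the open cell $\{[I_m\co F]:F^{T}=-F\}$ is irreducible and therefore lies in a single one of the two components of the $\iota_O$-fixed locus, so that the skew-symmetric curves genuinely sweep out one copy of $OG(m)$.
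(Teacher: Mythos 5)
Your proposal follows essentially the same route as the paper: a Kalman-type realization theorem with symmetry obtained by extracting the intertwiner $T$ between a minimal realization and its transposed/reflected counterpart, normalizing $T$ by congruence so that two balanced realizations differ by a free action of $O(n)$ or $Sp(n)$, counting parameters, and then matching dimensions against the spaces of rational curves in $LG(m)$ and $OG(m)$ (with the degree halving coming from the spinor embedding being composed with a Veronese). The only small imprecision is that in the Hamiltonian and skew-symmetric cases it is $JT$, not $T$ itself, that acquires a definite symmetry type before being congruenced to $J$, but this does not affect the argument or the counts.
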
  
%%%%%%%%%%%%%%%%%%%%%%%%%%%%%%%%%%%%%%%%%%%%%%%%%%%%%%%%%%%%%%%%%%%%%%%%%% 
 
The proof of Theorem~\ref{Th:one} is straightforward and given in Section~\ref{S:one}, 
following a proof of a version of the Kalman Realization 
Theorem~\cite[Theorem~6.2-4]{Ka80} for symmetric transfer functions. 
We do not know if Hamiltonian systems form dense open subsets of the space of curves 
satisfying~\eqref{Eq:Hamil_curves}, or if skew-Hamiltonian systems form dense open subsets 
of the space of curves satisfying~\eqref{Eq:Sk_Hamil_curves}, for these spaces of curves have 
yet to be studied.

As sets of linear systems with symmetry are identified with irreducible quasiprojective algebraic 
varieties, the notion of genericity for complex systems makes sense. 
That is, a property is \demph{generic} if it holds on a nonempty Zariski open subset 
(which is therefore dense) of the corresponding space. 
 
Because symmetric and skew-symmetric transfer functions are open subsets of the moduli 
spaces of rational curves in the Lagrangian Grassmannian and orthogonal Grassmannian, 
respectively, output feedback control by either static or dynamic symmetric and 
skew-symmetric linear systems is related to Schubert calculus on these Grassmannians,  
both classical (for static feedback laws) and quantum (for dynamic feedback). 
The main result of~\cite{HRW06} concerned static symmetric feedback. 
We establish the analogous result for static skew-symmetric feedback. 
 
The symmetry of skew-Hamiltonian and skew-symmetric linear systems is preserved by 
static skew-symmetric output feedback, so it is natural to place poles with 
static skew-symmetric controllers. 
The poles of a skew-Hamiltonian transfer function are invariant under multiplication by 
$-1$, so there are essentially only $\lfloor n/2\rfloor$ poles to place. 
Here, $\lfloor x \rfloor$ is the greatest integer not exceeding the real number $x$.
Similarly, poles of a skew-symmetric linear system occur with even 
multiplicity, and therefore a skew-symmetric linear system of even McMillan degree $n$  
has only $n/2$ poles to place. 
Our second main theorem gives necessary and sufficient conditions for pole placement with  
static skew-symmetric feedback. 
 
%%%%%%%%%%%%%%%%%%%%%%%%%%%%%%%%%%%%%%%%%%%%%%%%%%%%%%%%%%%%%%%%%%%%%%%%%% 
\begin{theorem}\label{Th:two} 
  A generic strictly proper skew-symmetric (respectively skew-Hamiltonian) transfer 
  function $G(s)$ with $m$ inputs, $m$ outputs, and McMillan degree $n$ is pole-assignable 
  with complex static skew-symmetric feedback compensators if and only if  
   $\lfloor n/2\rfloor \leq \binom{m}{2}$. 
\end{theorem}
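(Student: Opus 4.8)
The plan is to recast pole placement as the dominance of a central-projection map between projective varieties, following the strategy of Brockett--Byrnes and of Helmke--Rosenthal--Wang for the Lagrangian case. Recall from the setup preceding Theorem~\ref{Th:one} that a skew-symmetric static gain $K$ is identified with the point $L_K\in OG(m)$ spanned by $[K\co I_m]$, since $K$ is skew-symmetric exactly when this row space lies in $OG(m)$ (the dense open form $[F\co I_m]$). Under feedback $K$, a value $s$ is a closed-loop pole precisely when the Hermann-Martin curve point $\gamma(s)$ meets $L_K$ nontrivially, i.e.\ $\gamma(s)$ lies on the Schubert divisor $\Sigma_{L_K}:=\{H\in OG(m)\co H\cap L_K\neq 0\}$. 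First I would verify this incidence description from the closed-loop characteristic equation and check that $\Sigma_{L_K}$ is a divisor whose class is the ample generator of $\mathrm{Pic}(OG(m))$, namely the hyperplane class of the spinor embedding.

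Because a skew-symmetric transfer function has poles of even multiplicity, and a skew-Hamiltonian one has poles invariant under $s\mapsto -s$ (using $\gamma(-s)=\iota_O(\gamma(s))$ together with $\iota_O(L_K)=L_K$), the closed-loop pole data reduces in both cases to an effective divisor of degree $\lfloor n/2\rfloor$ on $\P^1$. Pulling the Schubert divisor back along $\gamma$ then defines the pole placement map
\[
  \chi\colon OG(m)\ \dashrightarrow\ \mathrm{Sym}^{\lfloor n/2\rfloor}\P^1\ =\ \P^{\lfloor n/2\rfloor}\,,
  \qquad L_K\ \longmapsto\ \gamma^{*}\Sigma_{L_K}\,,
\]
and assigning a generic set of poles is possible exactly when $\chi$ is dominant. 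Since $OG(m)$ has dimension $\binom{m}{2}$, the image of $\chi$ has dimension at most $\binom{m}{2}$; if $\lfloor n/2\rfloor>\binom{m}{2}$ this image is a proper subvariety of the target, so a generic pole configuration is not attained. This gives the necessity of $\lfloor n/2\rfloor\leq\binom{m}{2}$ immediately.

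For sufficiency I would show that $\chi$ is dominant whenever $\binom{m}{2}\geq\lfloor n/2\rfloor$ and the system is generic. Composing $\gamma$ with the spinor embedding $OG(m)\hookrightarrow\P(\Delta)$, the divisor $\Sigma_{L_K}$ is cut by a hyperplane, so $\chi$ factors as the Schubert-hyperplane map $L_K\mapsto\Sigma_{L_K}$ followed by restriction of hyperplanes to $\gamma$. By Theorem~\ref{Th:one} the generic skew-symmetric (resp.\ skew-Hamiltonian) system corresponds to a generic curve of degree $\lfloor n/2\rfloor$; such a curve is a rational normal curve spanning a $\P^{\lfloor n/2\rfloor}$ inside $\P(\Delta)$, so the restriction map $H^0(\P(\Delta),\mathcal O(1))\to H^0(\P^1,\mathcal O(\lfloor n/2\rfloor))$ is surjective. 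It then suffices to prove that the Schubert hyperplanes $\{\Sigma_{L_K}\}$ cover enough of this span, equivalently that the differential $d\chi$ is surjective at a generic point, equivalently that the $\lfloor n/2\rfloor$ incidence conditions at generic poles meet transversally.

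The hard part is exactly this last assertion: the divisors $\Sigma_{L_K}$ are constrained to be incident to the special subspaces supplied by $\gamma$, so they are not generic hyperplanes, and one must show they nonetheless sweep out all $\lfloor n/2\rfloor$ independent directions. I would resolve it by a tangent-space computation of $d\chi$ at a carefully chosen system, or else by exhibiting a single skew-symmetric (resp.\ skew-Hamiltonian) system and pole set for which the intersection of the $\lfloor n/2\rfloor$ Schubert divisors is transverse and nonempty; the openness of transversality together with the irreducibility of the parameter space from Theorem~\ref{Th:one} then propagates pole-assignability to the generic system, completing the sufficiency and hence the equivalence.
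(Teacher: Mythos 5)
Your framing of the problem is sound and, modulo language, matches the paper's: necessity follows from the dimension count $\dim\wedge^2\C^m=\dim OG(m)=\binom{m}{2}$ against the $\lfloor n/2\rfloor$-dimensional target of effective pole divisors (the paper gets the halving from the fact that $\det(sJ-(AJ-BFB^{\top}))$ is the square of a Pfaffian in the skew-symmetric case, and from $\varphi(s)=(-1)^n\varphi(-s)$ in the skew-Hamiltonian case), and sufficiency must come from exhibiting one system at which the pole placement map has surjective differential. But your proposal stops exactly where the proof begins. The entire content of the theorem is the ``hard part'' you name and then defer: producing a system and verifying that the $\lfloor n/2\rfloor$ incidence conditions are independent. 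Saying you ``would resolve it by a tangent-space computation at a carefully chosen system'' is a restatement of the problem, not a proof; no candidate system is proposed and no mechanism is given for why the constrained family of Schubert divisors $\{\Sigma_{L_K}\}$ (a $\binom{m}{2}$-dimensional subfamily of the full hyperplane system of the spinor embedding) should sweep out all $\lfloor n/2\rfloor$ directions. Your intermediate observation that the restriction map $H^0(\P(\Delta),\mathcal O(1))\to H^0(\P^1,\mathcal O(\lfloor n/2\rfloor))$ is surjective for a rational normal curve buys nothing here, precisely for the reason you yourself flag, and moreover the claim that a generic degree-$\ell$ curve in $OG(m)$ spans a $\P^{\ell}$ in the spinor embedding is itself unproved.

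For comparison, the paper's sufficiency argument is elementary and bypasses the spinor embedding entirely. It replaces the coefficients of $\varphi$ by the Newton power sums $\tr((A+BFB^{\top}J)^k)$, $k=1,\dotsc,\ell$, takes $A=\left[\begin{smallmatrix}D&0\\0&D\end{smallmatrix}\right]$ with $D=\diag(\alpha_1,\dotsc,\alpha_\ell)$ and $B=(\beta_j^{i-1})$ a Vandermonde matrix, and computes the differential at $F=0$ on the basis $e_i\wedge e_j$ of $\wedge^2\C^m$. The resulting $\ell\times\binom{m}{2}$ matrix factors as a product of two Vandermonde-type matrices and two invertible diagonal matrices (with diagonal entries $2k$ and $\beta_i^{\ell}-\beta_j^{\ell}$), hence has rank $\min\bigl(\ell,\binom{m}{2}\bigr)$. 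That explicit factorization is the missing step; also note the skew-Hamiltonian case needs a separate (though parallel) choice of $A$, including a rank-one modification when $n$ is odd, which your geometric formulation would additionally have to accommodate since the Hermann--Martin curve then lives in $G(m,2m)$ rather than in $OG(m)$.
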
 
%%%%%%%%%%%%%%%%%%%%%%%%%%%%%%%%%%%%%%%%%%%%%%%%%%%%%%%%%%%%%%%%%%%%%%%%%% 
 
In particular, when $m=4$, we see that a skew-symmetric system of McMillan degree  $12$ or 
less is pole-assignable with complex static skew-symmetric feedback compensators, and 
skew-Hamiltonian systems with $m=4$ and McMillan degree $13$ or less are pole-assignable. 
We remark that our proof does not determine the dense open subset of pole-assignable
symmetric  linear systems.  
 
Our third main result counts the number of feedback laws for a generic skew-symmetric system 
of McMillan degree $2\binom{m}{2}$. 
It also shows that there exist systems with real feedback 
laws, in a strong way---for these systems, every feedback law placing real poles is real. 
 
%%%%%%%%%%%%%%%%%%%%%%%%%%%%%%%%%%%%%%%%%%%%%%%%%%%%%%%%%%%%%%%%%%%%%%%%%% 
\begin{theorem}\label{Th:three} 
  A generic skew-symmetric linear system of McMillan degree  
  $2\binom{m}{2}$ has exactly 
 \[ 
   \DeCo{d_m}\ :=\  
   \binom{m}{2}!  \frac{1!\dotsb(m-2)!}{1! 3!\dotsb(2m-3)!} 
 \] 
 static complex skew-symmetric 
 controllers that place a given general set of $\binom{m}{2}$ poles. 
 
Moreover, for every $m$, there exists a real skew-symmetric linear system with $m$ inputs and outputs and 
 McMillan degree $2\binom{m}{2}$ such that for every choice of $\binom{m}{2}$ real poles, 
 there are are $d_m$ feedback laws, and every one is real. 
\end{theorem}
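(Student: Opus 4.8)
The plan is to treat the enumeration and the reality assertion separately, reducing both to Schubert calculus on the orthogonal Grassmannian $OG(m)$ through Theorem~\ref{Th:one}(4). First I would fix the geometric dictionary. By Theorem~\ref{Th:one}(4) a generic skew-symmetric system of McMillan degree $2\binom{m}{2}$ is a rational curve $\gamma\colon\P^1\to OG(m)$ of degree $\ell=\binom{m}{2}$, while a static skew-symmetric feedback law is a skew-symmetric matrix $K$ and hence, via the parametrization $[I_m\co K]$ of the dense cell of $OG(m)$, a point $K\in OG(m)$. The next step is to verify that a closed-loop pole lies at $s\in\P^1$ exactly when $K\cap\gamma(s)\neq 0$, i.e. when $K$ lies on the special Schubert divisor $\sigma_1(\gamma(s))\subset OG(m)$ determined by the isotropic plane $\gamma(s)$. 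Placing the $\binom{m}{2}$ prescribed poles then imposes $\dim OG(m)=\binom{m}{2}$ divisor conditions, so whenever the feedback laws are finite in number they are counted by the top self-intersection $\int_{OG(m)}\sigma_1^{\binom{m}{2}}$, the degree of $OG(m)$ in its spinor embedding.

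Two things then remain: a transversality statement and a numerical evaluation. For transversality I would use the irreducibility of the moduli space supplied by Theorem~\ref{Th:one}(4) together with a Kleiman-type argument for the action of the orthogonal group (moving the isotropic planes $\gamma(s_i)$ into general position), or equivalently exhibit a single reduced, transverse configuration and propagate transversality by upper semicontinuity; this guarantees that for a generic system and generic poles the actual number of feedback laws equals the intersection number. For the evaluation I would apply the Pieri rule in the cohomology of $OG(m)$, where Schubert classes are indexed by strict partitions inside the staircase $(m{-}1,\dots,2,1)$ and $\sigma_1$ is the special class; the top power $\sigma_1^{\binom{m}{2}}$ becomes a weighted count of saturated chains from $\emptyset$ to the full staircase, which evaluates to the product formula $d_m$ and is consistent with the values $1,1,2$ for $m=2,3,4$.

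For the reality statement the strategy is to construct one distinguished real system and force all of its feedback laws to be real. I would take $\gamma$ to be a real rational normal curve in $OG(m)$ whose points $\gamma(s)$ are the osculating isotropic $m$-planes of a fixed real rational normal curve in $\C^{2m}$, so that $\sigma_1(\gamma(s))$ becomes the osculating special Schubert condition at the parameter $s$. For real poles $s_i\in\R$ the feedback laws then solve an osculating, Wronski-type Schubert problem on $OG(m)$ with real osculation points, and I would conclude total reality either by invoking the relevant theorem --- the reality of the Wronski map proved by Mukhin, Tarasov, and Varchenko together with its type-$D$ (orthogonal) refinement via the Gaudin model --- or by reformulating the pole-placement equations as the spectrum of a real self-adjoint operator, whose eigenvalues are automatically real. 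A final check that the solutions at this configuration are simple then gives exactly $d_m$ feedback laws, all of them real.

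I expect the routine parts to be the dictionary between pole placement and the $\sigma_1$ condition and the Pieri evaluation of $d_m$. The genuine obstacle is the reality argument: total reality can fail for osculating Schubert problems on isotropic Grassmannians, so the curve $\gamma$ must be constructed precisely enough to land in a case where a total-reality theorem (or the self-adjoint reformulation) applies and where the $d_m$ solutions remain simple. Securing transversality for the generic complex count is a secondary but still nontrivial technical point.
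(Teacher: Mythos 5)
Your proposal follows essentially the same route as the paper: the same dictionary identifying feedback laws with points of $OG(m)$ meeting the isotropic planes $\gamma(s_i)$, the count $d_m=\deg(c_1^{\binom{m}{2}})$ as the degree of the spinor variety, transversality and the exact (rather than merely upper-bound) count obtained from one explicit real system built from osculating isotropic planes of a real rational normal curve, and total reality from the orthogonal Wronski map --- in the paper this is Purbhoo's theorem for $OG(m{-}1,2m{-}1)$, reached via the isomorphism $OG(m)\simeq OG(m{-}1,2m{-}1)$, which is the precise form of the ``MTV refinement'' you invoke. The one caveat is that your Kleiman-type alternative for transversality does not apply as stated (the planes $\gamma(s_i)$ lie on a fixed curve and cannot be moved into general position by the group action), but your fallback of exhibiting a single transverse configuration is exactly what the paper does, via the surjectivity of $d\Psi_0$ from the proof of Theorem~\ref{Th:two} together with the Purbhoo example.
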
  
%%%%%%%%%%%%%%%%%%%%%%%%%%%%%%%%%%%%%%%%%%%%%%%%%%%%%%%%%%%%%%%%%%%%%%%%%% 
 
We prove these theorems in Section~\ref{S:two}. 
The argument for Theorem~\ref{Th:two} is influenced by the proof in~\cite{HRW06}, 
but it is a considerable simplification. 
The skew-symmetric system with $d_m$ real feedback laws  
comes from the Wronski map in the Schubert calculus, and the result on reality is a 
restatement of a theorem of Purbhoo~\cite{Purbhoo}. 
 
We do not address questions about dynamic feedback. 
If we use a dynamic compensator of McMillan degree $q$ to place the poles of a 
linear system of McMillan degree $n$ with one of these symmetries, then a calculation shows 
that the resulting system (of McMillan degree $n+q$) has the same symmetry as the original 
system if and only if the compensator had that same symmetry.  
Thus it is natural to consider dynamic control when both the system and compensator have the 
same symmetry. 
A dimension count gives the necessary condition that $n+q$ be at most 
\[ 
   (m{+}1)q+\binom{m+1}{2}\,,\quad 
    mq+\binom{m+1}{2}\,,\quad 
    mq+\binom{m}{2}\,,\quad\mbox{and}\quad 
   (m{-}1)q+\binom{m}{2}\,, 
\] 
for generic pole placement of symmetric, Hamiltonian, skew-Hamiltonian, and skew-symmetric 
linear systems by dynamic controllers of the same symmetry.  
We do not know if these conditions are sufficient---this requires the generic surjectivity of 
the corresponding pole-placement map. 
 
If this dimension condition is necessary and sufficient, then the quantum Schubert calculus 
for $LG(m)$ and $OG(m)$~\cite{KT03,KT04} may be used to count the number of dynamic 
compensators for symmetric or skew-symmetric systems (also~\cite{Ruffo} for symmetric 
compensators).  
Counting dynamic compensators for 
Hamiltonian and skew-Hamiltonian systems requires a deeper 
study of the corresponding spaces of curves, for it will involve orbifold quantum cohomology~\cite{CR02}.   
 
Our results and analysis also apply to discrete-time linear systems with 
these symmetries, in the same way that continuous-time transfer functions are related 
to discrete-time transfer functions when there are no symmetries.

We thank Joachim Rosenthal who explained his work to us and encouraged us to extend his 
results to skew-symmetric transfer functions. We also thank Uwe Helmke for discussions on the 
systems theory background.

%%%%%%%%%%%%%%%%%%%%%%%%%%%%%%%%%%%%%%%%%%%%%%%%%%%%%%%%%%%%%%%%%%%%%%%%%%%%% 
% 
% 
\section{Geometry of state-space realization with symmetries}\label{S:one} 
 
We study the state-space realizations of transfer functions with symmetry and  
identify the spaces of such transfer functions as  
certain subvarieties of the moduli spaces of rational curves in Grassmannians. 
Portions of this material are classical or can be found in~\cite{HRW06}, but we include some 
proofs for completeness. We work entirely over the complex numbers. 
Write \Blue{$X^{\top}$} for the transpose of a matrix $X$ and \Blue{$X^{-\top}$} for 
$(X^{-1})^{\top}=(X^{\top})^{-1}$.  
Recall that a square matrix $X$ is \demph{symmetric} if $X^{\top}=X$ and  
\demph{skew-symmetric} if $X^{\top}=-X$. 
 
Let $J$ be the $2\ell\times 2\ell$ matrix, 
\[ 
   \DeCo{J}\ :=\ \left[\begin{matrix}0&I\\-I&0\end{matrix}\right], 
\] 
where $I$ is the $\ell\times\ell$ identity matrix. 
Note that $J^{\top}=-J=J^{-1}$. 
A $2\ell\times 2\ell$ matrix $X$ is \demph{Hamiltonian} if $XJ$ is symmetric and 
\demph{skew-Hamiltonian} if $XJ$ is skew-symmetric.

Let $m$ and $n$ be positive integers, which we assume are fixed throughout. 
We write $\ell$ for $\lfloor n/2\rfloor$. 
Suppose that we have a time-dependent complex linear system with inputs $u\in\C^m$,  
outputs $y\in\C^m$, and McMillan degree $n$. 
This has a minimal state-space realization: 
 \begin{equation}\label{Eq:State-Space} 
  \begin{array}{rcl} 
    \dot{x}&=& Ax + Bu\\ 
          y&=& Cx + Du\,, 
   \end{array} 
 \end{equation} 
where $A\in\C^{n\times n}$, $B\in\C^{n\times m}$, $C\in\C^{m\times n}$, and  
$D\in\C^{m\times m}$, and corresponding (proper) transfer function, 
\[ 
  \DeCo{G(s)}\ :=\ C(sI-A)^{-1}B + D\,. 
\] 
The \demph{poles} of the transfer function $G(s)$ are the eigenvalues of $A$. 
The transfer function $G(s)$ is \demph{strictly proper} if $D=0$. 
 
Given a strictly proper linear system, a static linear feedback law is given by an 
$m\times m$ matrix $F$, where we set $u=Fy+v$. 
The resulting linear system is  
 \begin{equation}\label{Eq:controlled_system} 
   \dot{x}\ =\ (A+BFC)x + Bv\qquad y\ =\ Cx\,, 
 \end{equation} 
and its transfer function has poles at the roots of 
 \[
    \varphi(s)\ :=\ \det(sI-(A+BFC))\,.
 \] 
A fundamental problem is:  When is it possible to choose $F$ to obtain a given choice of 
monic polynomial $\varphi(s)$? 
A system is \demph{pole-assignable} if the map $F\mapsto \varphi(s)$ is dominant (its image  
is a dense subset of the set of polynomials $\varphi(s)$). 
Our main result concerns the pole-assignability of generic linear systems with 
skew-Hamiltonian and skew-symmetric symmetry. 
We first study the spaces of linear systems with symmetry.\smallskip

The complex general linear group $GL(n)$ of invertible $n\times n$ matrices acts  
on the space of realizations~\eqref{Eq:State-Space} via 
\begin{equation*}\label{GLn_action} 
  X.(A,B,C,D)\ \longmapsto\ (X^{-1}AX,\, X^{-1}B,\, CX, D)\,, 
\end{equation*} 
where $X\in GL(n)$, and it preserves the transfer function. 

If we restrict this action to the dense open set of  
minimal state space realizations, 
then the Kalman Realization Theorem~\cite[Theorem~6.2-4]{Ka80} identifies the orbits with 
transfer functions and shows that $GL(n)$ acts without fixed points. 
In particular, if $(A,B,C,D)$ and $(\alpha,\beta,\gamma,\delta)$ are both minimal state 
space realizations of the same transfer function, then there is a unique $X\in GL(n)$ such 
that  
\[ 
   X.(A,B,C,D)\ =\ (\alpha,\beta,\gamma,\delta)\,. 
\] 
We first extend these classical facts to transfer 
functions with symmetries.  
 
%%%%%%%%%%%%%%%%%%%%%%%%%%%%%%%%%%%%%%%%%%%%%%%%%%%%%%%%%%%%%%% 
\begin{definition} 
 A transfer function $G(s)$ is  
 \demph{symmetric},  
 \demph{Hamiltonian},  
 \demph{skew-Hamiltonian},  or 
 \demph{skew-symmetric} if for all $s\in \C$ we have, 
 \[ 
    G(s)^{\top}\ =\ G(s)\,,\quad 
    G(s)^{\top}\ =\ G(-s)\,,\quad 
    G(s)^{\top}\ =\ -G(-s)\,,\quad\mbox{or}\quad 
    G(s)^{\top}\ =\ -G(s)\,, 
 \] 
 respectively. 
\end{definition} 
%%%%%%%%%%%%%%%%%%%%%%%%%%%%%%%%%%%%%%%%%%%%%%%%%%%%%%%%%%%%%%% 
 
State-space realizations may also have symmetries. 
 
%%%%%%%%%%%%%%%%%%%%%%%%%%%%%%%%%%%%%%%%%%%%%%%%%%%%%%%%%% 
\begin{definition}\label{Def:state-space} 
 A realization~\eqref{Eq:State-Space} is \demph{symmetric} if 
 $A$ is symmetric, $B=C^{\top}$, and $D$ is symmetric. 
 Symmetric realizations have symmetric transfer functions: 
 \[ 
   G(s)^{\top}\ =\ B^{\top}(sI-A^{\top})^{-1}C^{\top} + D^{\top} 
         \ =\ C(sI-A)^{-1}B + D\ =\  G(s)\,. 
 \] 
 
 A realization~\eqref{Eq:State-Space} with $n$ even is \demph{Hamiltonian}  
 if $A$ is Hamiltonian, $JB=C^{\top}$, and $D$ is symmetric. 
 Note that $A^{\top}=JAJ$ and $B^{\top}=CJ$. 
 Hamiltonian realizations have Hamiltonian transfer functions: 
 \begin{eqnarray*} 
   G(-s)^{\top}\ =\  
   B^{\top}(-sI-A^{\top})^{-1}C^{\top}+D^{\top} &=& CJ(sJIJ-JAJ)^{-1}JB + D^{\top}\\ 
                           &=& C(sI-A)^{-1}B + D^{\top}\ \ =\ G(s)\,. 
 \end{eqnarray*} 

 A realization~\eqref{Eq:State-Space} is \demph{skew-Hamiltonian} if 
 $A$ is skew-symmetric, $B=C^{\top}$, and $D$ is skew-symmetric. 
 Skew-Hamiltonian realizations have skew-Hamiltonian transfer functions: 
 \begin{eqnarray*} 
   -G(-s)^{\top}&=& -B^{\top}(-sI-A^{\top})^{-1}C^{\top} - D^{\top}\\ 
           &=&-C(-sI+A)^{-1}B + D\ =\  G(s)\,. 
 \end{eqnarray*} 
 
 Finally, a realization~\eqref{Eq:State-Space} with $n$ even is  
 \demph{skew-symmetric} if $A$ is skew-Hamiltonian, $JB=C^{\top}$, and $D$ is skew-symmetric. 
 In this case, $A^{\top}=-JAJ$ and $B^{\top}=CJ$. 
 Skew-symmetric realizations have skew-symmetric transfer functions: 
 \begin{eqnarray*} 
   -G(s)^{\top}&=&-B^{\top}(sI-A^{\top})^{-1}C^{\top} - D^{\top}\\ 
          &=&-CJ(-sJIJ+JAJ)^{-1}(-JB) +D\ =\ G(s)\,. 
 \end{eqnarray*} 
\end{definition} 
%%%%%%%%%%%%%%%%%%%%%%%%%%%%%%%%%%%%%%%%%%%%%%%%%%%%%%%%%%%%%%%%%% 
 
%%%%%%%%%%%%%%%%%%%%%%%%%%%%%%%%%%%%%%%%%%%%%%%%%%%%%%%%%%%%%%%%%% 
\begin{remark} 
 In symmetric and Hamiltonian realizations, the matrix $A$ has the same type 
 (symmetric or Hamiltonian, respectively), while for skew-Hamiltonian and skew-symmetric 
 realizations, the matrix $A$ has the opposite type; it is skew-symmetric or 
 skew-Hamiltonian, respectively.  
\end{remark} 
%%%%%%%%%%%%%%%%%%%%%%%%%%%%%%%%%%%%%%%%%%%%%%%%%%%%%%%%%%%%%%%%%% 
 
While there is no {\it a priori} reason that a transfer function 
with one of these symmetries
would have a minimal state-space realization with the
  same symmetry,  
that is indeed the case. 
The \demph{orthogonal group $O(n)$} is the subgroup of $GL(n)$ consisting of complex 
matrices $X$ with $X^{\top}X=I$, and when $n$ is even, the  \demph{symplectic group $Sp(n)$} 
is the subgroup of $GL(n)$ consisting of complex matrices $X$ with $X^{\top}JX=J$.  
We establish the analog of the Kalman Realization 
Theorem for transfer functions with symmetry.  
 
%%%%%%%%%%%%%%%%%%%%%%%%%%%%%%%%%%%%%%%%%%%%%%%%%%%%%%%%%%%%%%%%%%%%%%%%%%%%% 
\begin{proposition}\label{prop:realizations} 
 A transfer function has one of the symmetry types---symmetric, Hamiltonian, 
 skew-Hamiltonian, or skew-symmetric---if and only if it has a complex minimal state-space 
 realization having the corresponding symmetry type. 
 
 Furthermore, if  $(A,B,C,D)$ and $(\alpha,\beta,\gamma,\delta)$ are two such minimal 
 state-space  
 realizations of the same transfer function, then there is a unique matrix 
 $X\in O(n)$ (respectively $X\in Sp(n)$) such that  
 $X.(A,B,C,D)=(\alpha,\beta,\gamma,\delta)$ for symmetric and 
 skew-Hamiltonian transfer  
 functions, (respectively for Hamiltonian and skew-symmetric transfer functions). 
\end{proposition}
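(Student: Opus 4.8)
The plan is to reduce the statement to the classical Kalman Realization Theorem by attaching, to a transfer function $G(s)$ with a prescribed symmetry, a natural involution on the set of its minimal state-space realizations whose (possibly twisted) fixed points are exactly the realizations of the corresponding symmetry type. The ``if'' direction is already recorded in Definition~\ref{Def:state-space}, where each symmetric-type realization is shown to yield a transfer function of the matching type; so only the ``only if'' direction and the uniqueness clause remain.

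First I would set up three elementary operations on a realization $R=(A,B,C,D)$, each altering the transfer function in a controlled way: transpose $T(R)=(A^{\top},C^{\top},B^{\top},D^{\top})$ realizes $G_R(s)^{\top}$; a state sign change $(A,B,C,D)\mapsto(-A,\pm B,\mp C,D)$ realizes $G_R(-s)$; and an overall sign change $(A,B,C,D)\mapsto(A,\mp B,\pm C,-D)$ realizes $-G_R(s)$. Composing the operations dictated by the symmetry of $G$ (transpose alone for symmetric; transpose and state-sign for Hamiltonian; all three for skew-Hamiltonian; transpose and overall-sign for skew-symmetric) produces, from any minimal realization $R$ of $G$, a second minimal realization $\tau(R)$ of the \emph{same} $G$. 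With the signs chosen to match Definition~\ref{Def:state-space}, $\tau$ is a genuine involution in the symmetric and skew-Hamiltonian cases and an involution up to the central element $-I\in GL(n)$, i.e.\ $\tau(\tau(R))=(-I).R$, in the Hamiltonian and skew-symmetric cases. In all four cases one checks directly the intertwining relation $\tau(X.R)=X^{-\top}.\tau(R)$, since $T$ turns conjugation by $X$ into conjugation by $X^{-\top}$ while the two sign operations commute with the $GL(n)$-action.

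Next, because $R$ and $\tau(R)$ are minimal realizations of the same $G$, the Kalman Realization Theorem supplies a unique $X\in GL(n)$ with $\tau(R)=X.R$. Applying $\tau$ once more and using the intertwining relation together with the freeness of the action, I would deduce $X X^{-\top}=I$ in the symmetric and skew-Hamiltonian cases (so $X=X^{\top}$ is symmetric) and $X X^{-\top}=-I$ in the Hamiltonian and skew-symmetric cases (so $X=-X^{\top}$ is skew-symmetric). Invoking the complex congruence classification quoted before~\eqref{Eq:forms}---every invertible symmetric matrix equals $YY^{\top}$ and every invertible skew-symmetric matrix is congruent to $J$ (equivalently to $-J$)---I would produce $Y\in GL(n)$ with $X=YY^{\top}$, respectively $X=-YJY^{\top}$, and then verify that $Y.R$ satisfies $\tau(Y.R)=Y.R$ in the orthogonal cases and $\tau(Y.R)=(-J).(Y.R)$ in the symplectic cases. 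By the fixed-point description this is exactly a realization of the desired symmetry type, establishing existence.

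Finally, for the uniqueness clause, let $R$ and $R'$ be two realizations of the required symmetry type for the same $G$, and let $X\in GL(n)$ be the unique Kalman transformation with $X.R=R'$. Both realizations satisfy the (twisted) fixed-point identity, so computing $\tau(R')$ in two ways, via $R'=X.R$ and via the intertwining relation, and cancelling by freeness, forces $X^{\top}X=I$ in the symmetric and skew-Hamiltonian cases and $X^{\top}JX=J$ in the Hamiltonian and skew-symmetric cases; that is, $X\in O(n)$ respectively $X\in Sp(n)$, as asserted. The main obstacle is not conceptual but the sign bookkeeping: one must fix the conventions in the sign operations so that the (twisted) fixed points of $\tau$ coincide \emph{on the nose} with the realizations of Definition~\ref{Def:state-space}, and must recognize that in the symplectic cases the defining identity is the $J$-twisted one $\tau(R)=(-J).R$ rather than an honest fixed point $\tau(R)=R$. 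Once these identifications are correct, everything else follows formally from Kalman uniqueness and the congruence normal forms.
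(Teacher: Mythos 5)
Your proposal is correct and follows essentially the same route as the paper: produce a second minimal realization of $G$ by applying the symmetry operation, invoke Kalman uniqueness to get an $X$ that is forced to be symmetric (resp.\ skew-symmetric, up to the $J$-twist), factor $X$ by complex congruence (Takagi in the symmetric case) to conjugate $R$ into a realization of the required type, and re-run the Kalman uniqueness argument to land in $O(n)$ or $Sp(n)$. The only difference is packaging---you phrase the four cases uniformly via the involution $\tau$ and the intertwining relation $\tau(X.R)=X^{-\top}.\tau(R)$, where the paper carries out the same computations case by case with $J$ absorbed into the definition of $\tau$---and your closing caveat about sign conventions (e.g.\ $J.R$ versus $(-J).R$ as the correct twist) is exactly the bookkeeping one must settle.
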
 
%%%%%%%%%%%%%%%%%%%%%%%%%%%%%%%%%%%%%%%%%%%%%%%%%%%%%%%%%%%%%%%%%%%%%%%%%%%%% 
 
Following the proof for symmetric transfer functions in~\cite{HRW06} (see also~\cite{FH95}), 
we give the proof in the cases of skew-Hamiltonian and skew-symmetric transfer functions. 
The case of Hamiltonian transfer functions is similar, and is due to Brockett and 
Rahimi~\cite{BR72}.  Also, the first half, concerning symmetric realizations, is due to Brockett~\cite{Br78}. 
 
%%%%%%%%%%%%%%%%%%%%%%%%%%%%%%%%%%%%%%%%%%%%%%%%%%%%%%%%%%%%%%%%%%%%%%%%%%%%% 
\begin{proof} 
 We prove the forward implication in the first statement as we have already shown that a 
 state-space realization having one of these symmetries gives a transfer function with the 
 same symmetry. 
 
Suppose that $G(s)=-G(-s)^{\top}$ is a skew-Hamiltonian transfer function with minimal 
 state-space realization $(A,B,C,D)$.  
 Since 
 \[  
   -G(-s)^{\top}\ =\  -B^{\top}(-sI-A^{\top})^{-1} C^{\top}-D^{\top}\ =\ B^{\top}(sI+A^{\top})^{-1}C^{\top}-D^{\top}\,, 
 \] 
 $(-A^{\top}, C^{\top}, B^{\top}, -D^{\top})$ is also a minimal realization. 
 By the Kalman Realization Theorem, there is a unique invertible 
 matrix $X$ such that  
 \begin{eqnarray*} 
   (A,\ B,\ C,\ D)&=& X . (-A^{\top},\ C^{\top},\ B^{\top},\ -D^{\top}) \\ 
   &=& (-X^{-1}A^{\top}X,\ X^{-1}C^{\top},\ B^{\top}X,\ -D^{\top})\\ 
              &=& (-X^{-1}(-X^{-1}A^{\top}X)^{\top}X,\ X^{-1}(B^{\top}X)^{\top},\ (X^{-1}C^{\top})^{\top}X,\ -D^{\top})\\ 
              &=& (X^{-1}X^{\top}AX^{-\top}X,\ X^{-1}X^{\top}B,\ CX^{-\top}X,\ -D^{\top}). \\ 
          &=&(X^{-1}X^{\top}). (A,\ B,\ C,\ D). 
 \end{eqnarray*} 
Since $(A,\ B,\ C,\ D) = I. (A,\ B,\ C,\ D)$, another application of the Kalman
Realization Theorem gives us that $X^{-1}X^{\top}=I$; thus, $X$ is symmetric.  
   
An invertible complex symmetric matrix $X$ admits a Tagaki factorization $X=Y^{\top}Y$, 
with $Y$ invertible \cite[Corollary 4.4.4]{HJ1}.  
 Then the realization $(YAY^{-1}, YB, CY^{-1})$ of the 
 transfer function $G(s)$ is skew-Hamiltonian. 
 Indeed, 
 \begin{eqnarray*} 
   (YAY^{-1})^{\top}& =& Y^{-\top}A^{\top}Y^{\top}\ =\ -Y^{-\top}X^{\top}AX^{-\top}Y^{\top}\\ 
   & =& -Y^{-\top}Y^{\top}YAY^{-1}Y^{-\top}Y^{\top}\ =\ -YAY^{-1}\,. 
 \end{eqnarray*} 
 Similarly, $(YB)^{\top}=CY^{-1}$. 
 Indeed, $C=B^{\top}X$ so $B^{\top}=CX^{-1}$, and thus 
\[ 
   (YB)^{\top}\ =\ B^{\top}Y^{\top}\ =\ CX^{-1} Y^{\top}\ =\ C(Y^{-1}Y^{-\top})Y^{\top} 
   \ =\ CY^{-1}\,. 
\] 
 
 Now suppose that $(A,B,C,D)$ and $(\alpha,\beta,\gamma,\delta)$ are minimal 
 skew-Hamiltonian realizations of the same transfer function. 
 Let $X\in GL(n)$ be the unique matrix such that  
\[ 
   X.(A,B,C,D)\ =\  (X^{-1}AX,X^{-1}B,CX,D)\ =\  
   (\alpha,\beta,\gamma,\delta)\,. 
\] 
 Then we have  
\[ 
  \alpha\ =\ -\alpha^{\top}\ =\ -X^{\top}A^{\top}X^{-\top}\ =\ X^{\top}AX^{-\top}\,, 
\] 
 and similarly $\beta=X^{\top}B$ and $\gamma=CX^{-\top}$ so that  
 $X^{-\top}.(A,B,C,D)=(\alpha,\beta,\gamma,\delta)$. 
 It follows that $X^{-\top}=X$, by the uniqueness of $X$. 
 But then $X^{\top}X=I$ and so $X\in O(n)$ is orthogonal.\medskip

Consider next the case that $G(s)=-G(s)^{\top}$ is a skew-symmetric transfer function with
minimal state-space realization $(A,B,C,D)$.  
 Since $J^{-1}=J^{\top}$, we see that $-G(s)^{\top}$ equals 
 \begin{eqnarray*}  
  -B^{\top}(sI-A^{\top})^{-1} C^{\top} -D^{\top} &=& B^{\top}J^{\top}J(-sI+A^{\top})^{-1}JJ^{\top}C^{\top}-D^{\top} \\ 
             &=&  (JB)^{\top}(-sJ^{-1}IJ^{-1}+J^{\top}A^{\top}J^{\top})^{-1}(CJ)^{\top}-D^{\top} \\ 
             &=&  (JB)^{\top}(sI-(-JAJ)^{\top})^{-1}(CJ)^{\top}-D^{\top} \,, 
 \end{eqnarray*} 
 and so $(-(JAJ)^{\top}, (CJ)^{\top}, (JB)^{\top}, -D^{\top})$ is also a minimal realization of $G(s)$. 
 Then there is a unique $X\in GL(n)$ such that  
 \begin{eqnarray*}  
   (A,\ B,\ C,\ D)&=& X.(-(JAJ)^{\top},\ (CJ)^{\top},\ (JB)^{\top},\ -D^{\top})\\ 
              &=& (-X^{-1}(JAJ)^{\top}X,\ X^{-1}(CJ)^{\top},\ (JB)^{\top}X,\ -D^{\top})\,. 
 \end{eqnarray*} 
 Substituting the equality into itself and simplifying, we see that  
\begin{equation} \label{Eq:SIB} 
(A,B,C, D) =  (X^{-1}J^{\top}X^{\top}JAJX^{-\top}J^{\top}X,\ X^{-1}J^{\top}X^{\top}JB,\  CJ X^{-\top}J^{\top}X,\ D). 
 \end{equation} 
The right-hand side of~\eqref{Eq:SIB} is not immediately seen to have the form  
 $R.(A,B,C,D)$, 
 but if we set $R:=JX^{-\top}J^{\top}X$ and use that $-J^{\top}=J^{-\top}$ and $J^{-1}=-J$,  
 we obtain 
\[ 
  R^{-1}\ =\ X^{-1}J^{-\top}X^{\top}J^{-1}\ =\ X^{-1}J^{\top}X^{\top}J\,, 
\] 
 which shows that~\eqref{Eq:SIB} is $(R^{-1}AR, R^{-1}B, CR, D)$.  
 We conclude that $R=I$, so that $-I=JX^{-\top}J^{\top}X=J^{-\top}X^{-\top}JX$, and so  
 $(JX)^{\top}=-JX$ is skew-symmetric. 
 
Paralleling the argument for skew-Hamiltonian transfer functions, we will use this to 
obtain a skew-symmetric realization. 
The key ingredient is a factorization of the matrix $JX$. 
 An invertible complex skew-symmetric matrix $Z$ admits a Tagaki-like factorization $Z=Y^{\top}JY$. 
 Indeed, $Z = USU^{\top}$ for a unitary matrix $U$ and a block diagonal $S$ which is a direct sum of  
$2\times 2$ blocks of the form 
$\left[\begin{smallmatrix}0&a\;\\-a&0\end{smallmatrix}\right]$ with 
$a \in \mathbb C \setminus\{0\}$ (e.g., see \cite[Problem 26 in Chapter 4.4]{HJ1}). 
 Thus, after block scaling with blocks 
 $\left[\begin{smallmatrix}\sqrt{a}&0\\0&\sqrt{a}\end{smallmatrix}\right]$ and 
applying a permutation similarity, we arrive at the claimed factorization. 
 
 In the factorization $JX=Y^{\top}JY$ of the invertible skew-symmetric matrix $JX$,  
 the matrix $Y$ is also invertible. 
 Then the realization $(YAY^{-1}, YB, CY^{-1}, D)$ of the 
 transfer function $G(s)$ is skew-symmetric. 
 Indeed, as $D=-D^{\top}$ and we have $J=-J^{\top}=J^{-\top}$,  $X^{\top}J^{\top}=-JX=-Y^{\top}JY$, and  
 $J^{-\top}X^{-\top}=Y^{-1}JY^{-\top}$, we obtain 
 \begin{eqnarray*} 
   (YAY^{-1}J)^{\top}& =& J^{\top}Y^{-\top}A^{\top}Y^{\top}\ =\  
         -J^{\top}Y^{-\top} (X^{-1}(JAJ)^{\top}X)^{\top} Y^{\top}\\ 
   & =& -J^{\top}Y^{-\top}X^{\top}JAJX^{-\top} Y^{\top} \ =\ 
          J^{\top}Y^{-\top}X^{\top}J^{\top}AJ^{-\top}X^{-\top} Y^{\top}\\ 
   & =& -J^{\top}Y^{-\top}Y^{\top}JY  AY^{-1}JY^{-\top} Y^{\top}\ =\ -Y  AY^{-1}J\,. 
 \end{eqnarray*} 
 Similarly, $CY^{-1}=(JB)^{\top}XY^{-1}=(J YB)^{\top}$, 
 so the realization $(YAY^{-1}, YB, CY^{-1}, D)$ of $G(s)$ is skew-symmetric. 
\smallskip

 Finally, suppose that $(A,B,C,D)$ and $(\alpha,\beta,\gamma,\delta)$ are minimal 
 skew-symmetric realizations of the same transfer function. 
 Let $X\in GL(n)$ be the unique matrix such that  
\[ 
   X.(A,B,C,D)\ =\  (X^{-1}AX,X^{-1}B,CX,D)\ =\  
   (\alpha,\beta,\gamma,\delta)\,. 
\] 
 Recall that $-A^{\top}=JAJ$ and $\alpha =-J\alpha^{\top}J$, and so  
\[ 
  \alpha \ =\ -J\alpha^{\top}J\ =\ -JX^{\top}A^{\top}X^{-\top}J 
   \ =\ JX^{\top}JAJX^{-\top}J \ =\ J^{-1}X^{\top}JAJ^{-1}X^{-\top}J\,. 
\] 
 Similarly recall that $C^{\top}=JB$ and $\beta=J\gamma^{\top}$, so that  
\[ 
  \beta\ =\ J^{\top}\gamma^{\top}\ =\ J^{\top}(CX)^{\top}\ =\ J^{\top}X^{\top}C^{\top}\ =\ -JX^{\top}JB\ =\ J^{-1}X^{\top}JB\,. 
\] 
 Since we also have $\gamma=CJ^{-1}X^{-\top}J$, we see that  
 $(\alpha,\beta,\gamma,\delta)=R.(A,B,C,D)$, where $R=J^{-1}X^{-\top}J$. 
  By the uniqueness of $X$, we have $X= J^{-1}X^{-\top}J$ so that  
 $X^{\top} JX=J$ and so $X\in Sp(n)$ is symplectic. 
\end{proof} 
%%%%%%%%%%%%%%%%%%%%%%%%%%%%%%%%%%%%%%%%%%%%%%%%%%%%%%%%%%%%%%%%%%%%%%%%%%%%% 
 
We use this proposition to compute the dimensions of the corresponding spaces of transfer 
functions/rational curves, the first part of the proof of Theorem~\ref{Th:one}.  
 
%%%%%%%%%%%%%%%%%%%%%%%%%%%%%%%%%%%%%%%%%%%%%%%%%%%%%%%%%%%%%%%%%%%%%%%%%%%%% 
\begin{cor}\label{C:dim} 
 The set of transfer functions with a fixed symmetry is an irreducible 
 quasiprojective complex algebraic manifold. 
 For symmetric, Hamiltonian, skew-Hamilton\-ian, and skew-symmetric transfer functions, these have respective 
 dimensions  
\[ 
   (m{+}1)n\ +\ \tbinom{m{+}1}{2}\,,\quad 
         mn\ +\ \tbinom{m{+}1}{2}\,,\quad 
         mn\ +\ \tbinom{m}{2}\,,\quad\mbox{and}\quad 
   (m{-}1)n\ +\ \tbinom{m}{2}\,. 
\] 
\end{cor}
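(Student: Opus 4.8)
The plan is to deduce the corollary from Proposition~\ref{prop:realizations}, which exhibits the set of transfer functions of a given symmetry type as the orbit space of the space of minimal realizations of that type under a free action of $O(n)$ or $Sp(n)$. For each symmetry I would first describe the space $\mathcal{R}$ of \emph{all} state-space realizations $(A,B,C,D)$ satisfying the defining linear constraints of Definition~\ref{Def:state-space}; this is a linear subspace of $\C^{n\times n}\times\C^{n\times m}\times\C^{m\times n}\times\C^{m\times m}$. Minimality (controllability together with observability) is a Zariski-open condition, and it is nonempty because the forward direction of Proposition~\ref{prop:realizations} produces a minimal realization of the required type for any transfer function of that symmetry; hence the minimal realizations form a dense open---so smooth and irreducible---subset $\mathcal{R}^{\circ}\subseteq\mathcal{R}$ of the same dimension as $\mathcal{R}$.

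The next step is the dimension count for $\mathcal{R}$, which is routine linear algebra once the constraints are unwound. In every case $B$ is free and determines $C$ (via $C=B^{\top}$ or $C=(JB)^{\top}$), contributing $nm$ parameters, while $D$ is symmetric or skew-symmetric, contributing $\binom{m+1}{2}$ or $\binom{m}{2}$. For $A$ the constraint is that $A$ itself (symmetric and skew-Hamiltonian realizations) or $AJ$ (Hamiltonian and skew-symmetric realizations) be symmetric or skew-symmetric accordingly; since right multiplication by the invertible $J$ is a linear isomorphism, the space of admissible $A$ has dimension $\binom{n+1}{2}$ in the symmetric and Hamiltonian cases and $\binom{n}{2}$ in the skew-Hamiltonian and skew-symmetric cases. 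Summing yields $\dim\mathcal{R}=\binom{n+1}{2}+nm+\binom{m+1}{2}$ for symmetric and Hamiltonian realizations and $\dim\mathcal{R}=\binom{n}{2}+nm+\binom{m}{2}$ for skew-Hamiltonian and skew-symmetric ones.

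I would then pass to the quotient. The second half of Proposition~\ref{prop:realizations} says that two minimal realizations of the same type represent the same transfer function exactly when they lie in one orbit of $G$ (where $G=O(n)$ for the symmetric and skew-Hamiltonian types and $G=Sp(n)$ for the Hamiltonian and skew-symmetric types), and that the connecting element is unique---that is, $G$ acts freely on $\mathcal{R}^{\circ}$ with orbits equal to the fibers of the map to transfer functions. Each fiber is then a single free orbit isomorphic to $G$, so the set of transfer functions is irreducible, being the image of the irreducible $\mathcal{R}^{\circ}$, of dimension $\dim\mathcal{R}^{\circ}-\dim G$. Using $\dim O(n)=\binom{n}{2}$ and $\dim Sp(n)=\binom{n+1}{2}$ together with $\binom{n+1}{2}-\binom{n}{2}=n$, the four subtractions give $(m{+}1)n+\binom{m+1}{2}$, $mn+\binom{m+1}{2}$, $mn+\binom{m}{2}$, and $(m{-}1)n+\binom{m}{2}$, as claimed.

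The one step requiring genuine justification beyond bookkeeping is the assertion that the orbit space is an actual quasiprojective \emph{manifold} of the computed dimension, rather than merely a set with fibers of the right dimension, and I expect this to be the main obstacle. Freeness of the action already forces smoothness and pins down the fiber dimension, so irreducibility and the dimension formula follow at once; for the scheme-theoretic quotient $\mathcal{R}^{\circ}/G$ I would either invoke geometric invariant theory for the free action of a reductive group, or---more in the spirit of this paper---defer the manifold structure to the identification of these transfer functions with open subsets of the moduli of rational curves in $LG(m)$ and $OG(m)$ provided by Theorem~\ref{Th:one}, for which this corollary supplies precisely the dimension.
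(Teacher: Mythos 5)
Your proposal is correct and follows essentially the same route as the paper: identify the transfer functions of each symmetry type with the orbit space of the (Zariski-open, irreducible) space of minimal realizations of that type under the free action of $O(n)$ or $Sp(n)$ furnished by Proposition~\ref{prop:realizations}, compute the dimension of the realization space from the linear constraints on $(A,B,C,D)$, and subtract $\dim O(n)=\binom{n}{2}$ or $\dim Sp(n)=\binom{n+1}{2}$. The one point you flag as needing real justification---that the orbit space is a smooth irreducible variety---is handled in the paper by citing the quotient manifold theorem \cite[Th.~9.16]{Lee}, in the same spirit as the alternatives you propose.
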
 
%%%%%%%%%%%%%%%%%%%%%%%%%%%%%%%%%%%%%%%%%%%%%%%%%%%%%%%%%%%%%%%%%%%%%%%%%%%%% 
 
%%%%%%%%%%%%%%%%%%%%%%%%%%%%%%%%%%%%%%%%%%%%%%%%%%%%%%%%%%%%%%%%%%%%%%%%%%%%% 
\begin{proof} 
 The space of minimal complex symmetric  realizations is a Zariski-open subset of an affine space. 
 By Proposition~\ref{prop:realizations}, the set of 
 transfer functions with a fixed symmetry type is identified with the set of orbits of 
 an algebraic group ($O(n)$ or $Sp(n)$) acting freely on the space of minimal symmetric 
 realizations, which is an open subset of a vector space.   
 The first statement of the corollary follows from this as the set of such orbits has a 
 natural structure as an irreducible smooth complex algebraic variety~\cite[Th.~9.16]{Lee}.   
 
 For the second, we note that the dimension of the orbit space is the 
 difference of the dimensions of the space of symmetric realizations and of 
 the group. 
 The orthogonal group $O(n)$ has dimension $\binom{n}{2}$ and the  
 symplectic group $Sp(n)$ has dimension $\binom{n{+}1}{2}$~\cite{FuH}. 
 Spaces of symmetric and Hamiltonian realizations both have the same dimension 
\[ 
   \binom{n{+}1}{2}\ +\ nm\ +\ \binom{m{+}1}{2}\,, 
\] 
 while the spaces of skew-Hamiltonian and skew-symmetric realizations both have dimension  
\[ 
  \binom{n}{2}\ +\ nm\ +\ \binom{m}{2}\,. 
\] 
 The corollary now follows. 
\end{proof} 
%%%%%%%%%%%%%%%%%%%%%%%%%%%%%%%%%%%%%%%%%%%%%%%%%%%%%%%%%%%%%%%%%%%%%%%%%%%%% 
 
We now complete the proof of Theorem~\ref{Th:one} by identifying the Hermann-Martin 
  curves of symmetric and skew-symmetric transfer functions with dense open subsets of the 
appropriate spaces of rational curves in the Lagrangian and Orthogonal Grassmannians. 
First note that if $G(s)$ is symmetric (respectively skew-symmetric) then for $s\in\P^1$, the row space $K(s)$ of the matrix  
$[I_m\co G(s)]$ lies in $LG(m)$ (respectively in $OG(m)$). 
Thus the Hermann-Martin curve is a curve in $LG(m)$ (respectively in $OG(m)$). 
 
To finish, we show that that these Hermann-Martin curves are  
dense in the corresponding spaces of rational curves, which is a consequence of their 
having the same dimension. 
The space of rational curves in $LG(m)$ of degree $d$ has dimension  
$d(m{+}1)+\binom{m{+}1}{2}$ and the space of rational curves in $OG(m)$ of degree $d$ 
has dimension $2d(m{-}1)+\binom{m}{2}$~\cite{KT03,KT04}. 
Thus Theorem~\ref{Th:one} follows if we knew that a curve in $LG(m)$ of McMillan degree 
$n$ has degree $n$ in $LG(m)$ and a curve in $OG(m)$ of McMillan degree $n=2\ell$ has 
degree $\ell$ in $OG(m)$. 
 
These facts are well-known.
The McMillan degree of a curve is it degree in the classical
  Grassmannian $G(m,2m)$ in its Pl\"ucker embedding.
The inclusion $LG(m)\hookrightarrow G(m,2m)$ arises from a linear map on the standard 
projective embedding of $LG(m)$, so rational curves of degree $n$ in $LG(m)$ have degree 
$n$ in $G(m,2m)$, and hence McMillan degree $n$.  
On the other hand, the inclusion $OG(m)\hookrightarrow G(m,2m)$ arises from 
the second Veronese map on the natural projective embedding of $OG(m)$. 
% 
%   This above point is a bit subtle.   The Pluecker space embedding of OG(m) in G(m,2m) 
%     is not the minimal projective embedding of OG(m), but rather the second Veronese 
%     reembedding of OG(m). 
Thus a rational curve of degree $\ell$ in $OG(m)$ will have degree $2\ell$ in $G(m,2m)$, 
and hence McMillan degree $2\ell$.
This completes the proof of Theorem~\ref{Th:one}.

%%%%%%%%%%%%%%%%%%%%%%%%%%%%%%%%%%%%%%%%%%%%%%%%%%%%%%%%%%%%%%%%%%%%%%%%%%%%% 
% 
% 
\section{Static skew-symmetric state feedback control}\label{S:two} 
 
Suppose that we have a strictly proper ($D = 0$) skew-Hamiltonian or skew-symmetric linear system 
with a minimal state-space realization 
\begin{equation}\label{Eq:system} 
   \dot{x}\ =\ Ax+Bu\qquad\quad y\ =\ Cx\,. 
\end{equation} 
If we introduce a static linear state-space feedback law $u=Fy+v$, then the new system 
 \begin{equation}\label{Eq:feedback} 
   \dot{x}\ =\ (A+BFC)x+Bv\qquad\quad y\ =\ Cx\, 
 \end{equation} 
has the same symmetry as the original system when $F$ is skew-symmetric. 
(The elementary calculation is given below.) 
We investigate the control of such linear systems with complex skew-symmetric static 
state-space feedback. 
We first establish the necessary and sufficient conditions for generic pole placement of 
Theorem~\ref{Th:two}, relate skew-symmetric feedback control to the Schubert calculus 
on the orthogonal Grassmannian, and then prove Theorem~\ref{Th:three}, counting the number 
of feedback laws that place a generic set of poles of a generic skew-symmetric transfer 
function with McMillan degree $2\binom{m}{2}$.  
We do not yet know how to count the controllers of a skew-Hamiltonian transfer function of 
McMillan degree $n$ when $\binom{m}{2}=\lfloor n/2\rfloor$. 
 
%%%%%%%%%%%%%%%%%%%%%%%%%%%%%%%%%%%%%%%%%%%%%%%%%%%%%%%%%%%%%%%%%%%%%%%% 
\begin{proof}[Proof of Theorem~$\ref{Th:two}$] 
We give the proof for generic pole-assignability of skew-symmetric transfer 
functions and indicate how the argument changes for skew-Hamiltonian transfer functions. 
This follows and simplifies the arguments in~\cite{HRW06}. 
 
We identify skew-symmetric $N\times N$ matrices with the vector space $\wedge^2\C^N$, 
where the elementary decomposable tensor $e_i\wedge e_j$ ($i \neq j$) corresponds to the matrix having 
1 in position $(i,j)$, $-1$ in position $(j,i)$, and 0 in other positions. 
 
Suppose that~\eqref{Eq:system} is skew-symmetric, so that $A$ is skew-Hamiltonian, 
$(AJ)^{\top}=-AJ$, and $C=B^{\top}J$.  
If we have a feedback law $u=Fy+v$ where $F$ is skew-symmetric, then 
the new system~\eqref{Eq:feedback} remains skew-symmetric as  
$A+BFB^{\top}J$ is skew-Hamiltonian. 
Thus, the characteristic polynomial  
 \begin{equation}\label{Eq:char_ss} 
   \varphi(s)\ :=\ \det(sI - (A + BFB^{\top}J))\ =\ \det(sJ - (AJ - BFB^{\top})), 
 \end{equation} 
is the determinant of a skew-symmetric matrix and is therefore a square (its determinant is the square of its Pfaffian). 
Thus it is natural to ask for skew-symmetric feedback laws $F$ which place these 
$\ell=n/2$ roots (which are poles of the transfer function). 
 
The pole placement map sends a 
skew-symmetric matrix $F\in\wedge^2\C^m$ to the degree $2\ell$ polynomial 
$\varphi(s)$. 
Since this monic polynomial is a square, its last $\ell$ coefficients (those of 
$s^{2\ell-1},\dotsc,s^{\ell}$) determine its first $\ell$ coefficients. 
These coefficients are, up to a sign, the elementary symmetric functions of the 
eigenvalues of $A+BFB^{\top}J$. 
By the Newton identities, these coefficients determine, and are determined by, the Newton 
power sums which are the traces of 
$(A+BFB^{\top}J)^k$ for $k=1,\dotsc,\ell$. 
 
To show generic pole-assignability, we only need to exhibit one choice of matrices $A,B$ for 
which the map  
\[ 
   \DeCo{\Psi}\ \colon\  
    \wedge^2 \C^m\ni F\ \longmapsto\  
    (\tr((A+BFB^{\top}J)^k)\mid k=1,\dotsc,\ell)\ \in\ \C^\ell 
\] 
is dominant. 
We do this by showing that the differential $d\Psi_0$ at $0\in\wedge^2\C^m$ is  
surjective.  
 
Let $\alpha_1,\dotsc,\alpha_\ell$ be distinct numbers and $\beta_1,\dotsc,\beta_m$ be 
numbers such that the $\binom{m}{2}$ products $\beta_i\beta_j$ for $i<j$ are distinct, and 
such that $\beta_i^\ell\neq\beta_j^\ell$, for every $i\neq j$. 
Let $D=\diag(\alpha_1,\dotsc,\alpha_\ell)$ be the diagonal matrix with entries 
$\alpha_1,\dotsc,\alpha_\ell$, and let $A$ be the block diagonal matrix  
$\left[\begin{smallmatrix}D&0\\0&D\end{smallmatrix}\right]$. 
Finally, let $B$ be the matrix with entries $\beta_j^{i-1}$ for $i=1,\dotsc,2\ell$ and 
$j=1,\dotsc,m$.  
For this choice of $A,B$, the differential $d\Psi$ is surjective at 
$0\in\wedge^2\C^m$, for this implies that the image contains an open set in the
  classical topology, and thus a Zariski-open subset.  
 
To see surjectivity, note that the differential at $0$ is the linear map 
 \begin{equation}\label{Eq:dPsi0} 
   d\Psi_0\ \colon F\ \longmapsto\  
  (k\cdot \tr(A^{k-1}\cdot BFB^{\top}J)\mid k=1,\dotsc,\ell)\,. 
 \end{equation} 
Consider this map on the basis element $e_i\wedge e_j$ of $\wedge^2\C^m$. 
A direct calculation shows that $B(e_i\wedge e_j)B^{\top}$ is the vector $b_i\wedge b_j$, 
where $b_1,\dotsc,b_m$ are the columns of $B$. 
For our choice of $B$, the $(p,q)$-entry of $b_i\wedge b_j$ is 
\[ 
   \det\left[\begin{matrix}\beta_i^{p-1}&\beta_j^{p-1}\\ 
                           \beta_i^{q-1}&\beta_j^{q-1}\end{matrix}\right] 
  \ =\ (\beta_i\beta_j)^{p-1}(\beta_j^{q-p}-\beta_i^{q-p})\,. 
\] 
It follows that the map $d\Psi_0$ sends the vector $e_i\wedge e_j$ to the vector 
\[ 
   \Bigl(k\sum_{p=1}^{2\ell} (A^{k-1}(b_i\wedge b_j) J)_{p,p}\;\Big\vert\; 
    k=1,\dotsc,\ell\Bigr)\,. 
\] 
Since
  $A^{k-1}=\diag(\alpha_1^{k-1},\dotsc,\alpha_\ell^{k-1},\alpha_1^{k-1},\dotsc,\alpha_\ell^{k-1})$,
\[
   \bigl((b_i\wedge b_j)J\bigr)_{p,p}\ =\ \left\{
     \begin{array}{rcl} -(b_i\wedge b_j)_{p,p+\ell}&\ &\mbox{if }p\leq\ell\\
                         (b_i\wedge b_j)_{p,p-\ell}&\ &\mbox{if }p>\ell
     \end{array}\right.\ ,
\]
and $(b_i\wedge b_j)$ is skew-symmetric, this vector is  
\[ 
   \Bigl(2k\sum_{p=1}^{\ell}  
     \alpha_p^{k-1}(\beta_i\beta_j)^{p-1}(\beta_i^{\ell}-\beta_j^{\ell})\;\Big\vert\; 
    k=1,\dotsc,\ell\Bigr)\,. 
\] 
Thus, $d\Psi_0$ is represented by the $\ell\times\binom{m}{2}$ matrix which is the 
product of the matrices 
\[ 
   \diag(2,4,\dotsc,2\ell)\cdot 
    \bigl(\alpha_p^{k-1}\bigr)_{p=1,\dotsc,\ell}^{k=1,\dotsc,\ell} \cdot 
    \bigl( (\beta_i\beta_j)^{p-1}\bigr)_{1\leq i<j\leq m}^{p=1,\dotsc,\ell} 
   \cdot \diag( \beta_i^\ell-\beta_j^\ell | 1\leq i<j\leq m)\,, 
\] 
and so its rank is the minimum of $\ell$ and $\binom{m}{2}$, which proves Theorem~\ref{Th:two} 
for skew-symmetric linear systems. 
\medskip 
 
Suppose now that~\eqref{Eq:system} is skew-Hamiltonian, so that $A$ is skew-symmetric  
and $C=B^{\top}$.   
Under a skew-symmetric feedback law $u=Fy+v$,  
the new system~\eqref{Eq:feedback} remains skew-Hamiltonian 
as $A+BFB^{\top}$ is skew-symmetric. 
The characteristic polynomial  
 \begin{equation}\label{Eq:char_sH} 
   \varphi(s)\ :=\ \det(sI - (A + BFB^{\top})) 
 \end{equation} 
satisfies $\varphi(s)=(-1)^n\varphi(-s)$, so its nonzero roots $\lambda$ occur in pairs 
$\pm\lambda$.  
Thus it is natural to ask for skew-symmetric feedback laws $F$ which place these 
$\ell=\lfloor n/2\rfloor$ pairs of roots (which are poles of the transfer function). 
 
The pole placement map sends a 
skew-symmetric matrix $F\in\wedge^2\C^m$ to the degree $n$ polynomial 
$\varphi(s)$. 
As before, we investigate the surjectivity of the pole-placement map by considering the  
map associating Newton power sums, which is 
 \[ 
  \Psi\ \colon\ \wedge^2\C^m\ni F\ \longmapsto 
   \  (\tr(A+BFB^{\top})^k\mid k=1,\dotsc,n)\ \in \C^\ell\,. 
 \] 
Since $A+BFB^{\top}J$ is skew-symmetric, the trace is zero if $k$ is odd, which is why the 
codomain of this map is $\C^\ell$. 
Thus, the differential of $\Psi$ at $0$ is  
\[ 
  d\Psi_0\ \colon\ F\ \longmapsto \ (2k\cdot\tr(A^{2k-1}BFB^{\top})\mid k=1,\dotsc,\ell)\,. 
\] 
Let $D$ be the same diagonal matrix as before. 
If $n$ is even, let $A$ be the block matrix  
$\left[\begin{smallmatrix}0&D\\-D&0\end{smallmatrix}\right]$ and $B$ be the same as 
before.  
If $n$ is odd, then add a new first row and column of $0$s to $A$ and extend $B$ with the row 
$(\beta_i^{2\ell} \, | \, i=1,\dotsc,m)$. 
Then nearly the same calculation as before shows that $d\Psi_0$ is surjective when 
$\ell\leq\binom{m}{2}$, which completes the proof of Theorem~\ref{Th:two}. 
\end{proof} 
%%%%%%%%%%%%%%%%%%%%%%%%%%%%%%%%%%%%%%%%%%%%%%%%%%%%%%%%%%%%%% 

Before proving Theorem~\ref{Th:three}, we first recall some standard matrix manipulations which  
transform the problem of finding matrices $F$ which place the poles of the transfer 
function~\eqref{Eq:feedback} into a geometric problem on a Grassmannian. We then make some 
definitions. 
 
These poles are the roots of the characteristic polynomial $\varphi(s)=\det(sI_n-(A+BFC))$ 
of the matrix in~\eqref{Eq:feedback}. 
The rational function $\varphi(s)/\det(sI_n-A)$ equals the determinant of the product 
\[ 
  \left[\begin{matrix} 
      (sI_n-A)^{-1}&0&0\\-C(sI_n-A)^{-1}&I_m&0\\0&0&I_m\end{matrix}\right] 
  \left[\begin{matrix}sI_n-A-BFC&BF&-B\\0&I_m&0\\0&0&I_m\end{matrix}\right] 
  \left[\begin{matrix}I_n&0&0\\C&I_m&0\\0&F&I_m\end{matrix}\right] 
\] 
which is 
\[ 
  \left[\begin{matrix} 
      (sI_n-A)^{-1}&0&0\\-C(sI_n-A)^{-1}&I_m&0\\0&0&I_m\end{matrix}\right] 
  \left[\begin{matrix}sI_n-A&0&-B\\C&I_m&0\\0&F&I_m\end{matrix}\right] 
  \ =\  
  \left[\begin{matrix}I_n&0&-(sI_n-A)^{-1}B\\0&I_m&C(sI_n-A)^{-1}B\\0&F&I_m\end{matrix}\right].  
\] 
The transfer function $G(s)=C(sI_n-A)^{-1}B$ admits a left coprime factorization into 
matrices of polynomials $D(s)^{-1}N(s)$, where $\det D(s)=\det(sI_n-A)$, 
and so we have  
 \begin{equation}\label{Eq:geometry} 
   \varphi(s)\ =\ \det(sI_n-A)\det\left[\begin{matrix}I_m&G(s)\\F&I_m\end{matrix}\right] 
   \ =\ \det \left[\begin{matrix} D(s)&N(s)\\F&I_m\end{matrix}\right]. 
 \end{equation} 
Thus, $s$ is a pole of the transfer function of~\eqref{Eq:feedback} 
if and only if the matrix on the right of~\eqref{Eq:geometry} does not have full rank. 
Geometrically, if $K(s)$ is the row space of $[D(s)\co N(s)]$ and $H$ is the row space 
of $[F\co I_m]$, which are both $m$-planes in $\C^{2m}$, then $K(s)\cap H\neq\{0\}$. 
 
It follows that the feedback laws $F$ which place a given set of poles $s_1,\dotsc,s_n$ 
correspond to those $H\in OG(m)$ such that  
 \begin{equation}\label{Eq:SchubertVariety} 
   K(s_i)\cap H\ \neq\ \{0\}\,, \ \ \text{for each $i=1,\dotsc,n$.} 
 \end{equation} 
When $G(s)$ is skew-symmetric, then $n=2\ell$ and $\varphi(s)$ has only $\ell$ distinct 
roots, say  $s_1,\dotsc,s_\ell$. 
In this case, we also have that $K(s_i)$ is isotropic and the set of $H\in OG(m)$ 
satisfying~\eqref{Eq:SchubertVariety} defines a \demph{Schubert subvariety} of $OG(m)$, which 
  represents the first Chern class $c_1$ of the tautological bundle.  
A consequence of the surjectivity of the map $d\Psi_0$~\eqref{Eq:dPsi0} for generic $A,B$ is 
that these Schubert varieties meet generically transversally (in the open set 
consisting of $H$ of the form $[F\co I_m]$). 
Thus, when $\ell=\binom{m}{2}$ there are finitely many $H$ 
satisfying~\eqref{Eq:SchubertVariety}  for $i=1,\dotsc,\ell$, and their count is bounded 
above by the intersection number $\deg(c_1^\ell)$, which may be computed  
using the Schubert calculus on $OG(m)$~\cite{FuPr}. It is equal to
 \[ 
   \DeCo{d_m}\ :=\  
   \binom{m}{2}! \frac{1!\dotsb(m-2)!}{1! 3!\dotsb(2m-3)!}\,, 
 \] 
which is also the degree of $OG(m)$ in its natural embedding as the spinor variety. 
We complete the proof of Theorem~\ref{Th:three} by exhibiting a specific skew-symmetric 
transfer function $G(s)$ of McMillan degree $2\binom{m}{2}$ such that there are 
exactly $d_m$ skew-symmetric feedback laws placing any given $\binom{m}{2}$ 
real poles, and all the feedback laws are real. 
This also shows that generic systems have $d_m$ feedback laws.  
 
The argument uses a result of Purbhoo~\cite{Purbhoo} concerning the reality of the Wronski 
map, which we will transfer into the language of systems theory. 
Suppose that $\C^{2m}$ has ordered basis $\be_1,\dotsc,\be_{2m}$. 
Let \demph{$\gamma(s)$} be the vector-valued function $\gamma\colon\C\to\C^{2m}$ with 
components  
\[ 
   \bigl(\,1\,,\, s\,,\,  
    \frac{s^2}{2!}\,,\, \dotsc\,,\, \frac{s^{m-1}}{(m{-}1)!}\frac{1}{\sqrt{2}} 
    \ ,\  \frac{(-s)^{2m-2}}{(2m-2)!}\,,\,\dotsc,  
    \frac{(-s)^m}{m!}\,,\, \frac{(-s)^{m-1}}{(m{-}1)!}\frac{1}{\sqrt{2}}\,\bigr)\,. 
\] 
If we set $\DeCo{v_i(s)}:=\left(\frac{d}{ds}\right)^{i-1} \gamma(s)$ for $i=1,\dotsc,m{-}1$  
and $v_m(s):=\left(\frac{d}{ds}\right)^{m-1} \gamma(s)+ 
\frac{1}{\sqrt{2}}(e_m+(-1)^{m-1}e_{2m})$,  
then the row span of $v_1(s),\dotsc,v_m(s)$ is isotropic. 
While this defines a curve in $OG(m)$, it does not come from a skew-symmetric linear 
system, as it does not correspond to a strictly proper transfer function. 
However, the row span  \demph{$K(s)$} of the vectors  
$s^{2m-2}v_1(s^{-1}),\dotsc,s^{m-1}v_m(s^{-1})$, is still isotropic and it comes from a 
strictly proper skew-symmetric transfer function.
We display this for $m=5$, giving a $5\times 10$ matrix with rows
 $s^{2m-2}v_1(s^{-1}),\dotsc,s^{m-1}v_m(s^{-1})$: 
\[ 
   \left[\begin{matrix} 
    s^8&s^7&\frac{s^6}{2}&\frac{s^5}{3!}&\frac{1}{\sqrt{2}}\frac{s^4}{4!}& 
                        \rule{3pt}{0pt}\frac{1}{8!}& 
     -\frac{s}{7!}&\frac{s^2}{6!}&-\frac{s^3}{5!}&\frac{1}{\sqrt{2}}\frac{s^4}{4!}\\ 
    0&s^7&s^6&\frac{s^5}{2}&\frac{1}{\sqrt{2}}\frac{s^4}{3!}& 
        \rule{0pt}{17pt}\rule{3pt}{0pt} \frac{1}{7!}&  
     -\frac{s}{6!}&\frac{s^2}{5!}&-\frac{s^3}{4!}&\frac{1}{\sqrt{2}}\frac{s^4}{3!}\\ 
    0&0&s^6&s^5&\frac{1}{\sqrt{2}}\frac{s^4}{2}& 
       \rule{0pt}{17pt}\rule{3pt}{0pt} \frac{1}{6!}& 
     -\frac{s}{5!}&\frac{s^2}{4!}&-\frac{s^3}{3!}&\frac{1}{\sqrt{2}}\frac{s^4}{2}\\ 
    0&0&0&s^5&\frac{1}{\sqrt{2}}s^4& 
       \rule{0pt}{17pt}\rule{3pt}{0pt} \frac{1}{5!}& 
     -\frac{s}{4!}&\frac{s^2}{3!}&-\frac{s^3}{2!}&\frac{1}{\sqrt{2}}s^4\\ 
    0&0&0&0&\sqrt{2}s^4&\rule{0pt}{17pt}\rule{3pt}{0pt} \frac{1}{4!}& 
     -\frac{s}{3!}&\frac{s^2}{2!}&-s^3&0 
   \end{matrix}\right]. 
\] 
 
If we write $K(s)=[D(s)\co N(s)]$, then $D(s)$ is an upper triangular matrix with diagonal 
\mbox{$(s^{2m-2},\dotsc,s^m,\sqrt{2}s^{m-1})$,} and hence is 
invertible for all $s\neq 0$. Set $\DeCo{G(s)}:=D(s)^{-1}N(s)$, which is strictly proper and real. 
Here is $G(s)$ when $m=5$: 
\[ 
   \left[\begin{matrix} 
     0& \frac{5}{2}\frac{1}{7!s^7}&-\frac{5}{2}\frac{1}{6!s^6}& 
        \frac{3}{2}\frac{1}{5!s^5}&-\frac{1}{\sqrt{2}}\frac{1}{4!s^4}\\ 
    -\frac{5}{2}\frac{1}{7!s^7}&0\rule{0pt}{17pt}& \frac{1}{5!s^5}& 
      -\frac{1}{4!s^4}& \frac{1}{\sqrt{2}}\frac{1}{3!s^3}\\ 
    \frac{5}{2}\frac{1}{6!s^6}& -\frac{1}{5!s^5}&0\rule{0pt}{17pt}&  
      \frac{1}{2}\frac{1}{3!s^3}&-\frac{1}{\sqrt{2}}\frac{1}{2!s^2}\\ 
    -\frac{3}{2}\frac{1}{5!s^5}&\frac{1}{4!s^4}& 
      -\frac{1}{2}\frac{1}{3!s^3}&0\rule{0pt}{17pt}&\frac{1}{\sqrt{2}}\frac{1}{s}\\ 
    \frac{1}{\sqrt{2}}\frac{1}{4!s^4}& -\frac{1}{\sqrt{2}}\frac{1}{3!s^3}& 
   \frac{1}{\sqrt{2}}\frac{1}{2!s^2}&   -\frac{1}{\sqrt{2}}\frac{1}{s}&0\rule{0pt}{17pt} 
    \end{matrix}\right]. 
\]

Theorem~\ref{Th:three} follows from the following facts about the transfer function $G(s)$. 
 
\begin{proposition} 
 The transfer function $G(s)$ is skew-symmetric with McMillan degree $2\binom{m}{2}$. 
 Any set of $\binom{m}{2}$ distinct real poles is placed by  
 exactly $d_m$ skew-symmetric feedback laws, with each one real. 
\end{proposition}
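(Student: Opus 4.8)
The plan is to establish the three claims—skew-symmetry, McMillan degree $2\binom{m}{2}$, and the exact count and reality of the feedback laws—in turn, reducing the last and hardest to Purbhoo's reality theorem for the Wronski map on $OG(m)$. For skew-symmetry I rely on the isotropy of the moving plane $K(s)$ recorded in its construction: the factors $\tfrac{1}{\sqrt{2}}$ in the middle and final coordinates of $\gamma(s)$, together with the sign $(-1)^{m-1}$ in the correction term defining $v_m(s)$, are chosen exactly so that $\langle v_i(s),v_j(s)\rangle = 0$ for all $i,j$, which is the short computation pairing the $k$-th and $(2m{+}1{-}k)$-th coordinates. Granting $K(s)\in OG(m)$ and writing $K(s)=[D(s)\co N(s)]$ with $G(s)=D(s)^{-1}N(s)$, the characterization from Section~\ref{S:one} (a plane $[I_m\co F]$, equivalently $[D\co N]$, lies in $OG(m)$ iff $F$, equivalently $G=D^{-1}N$, is skew-symmetric) yields $G(s)^{\top}=-G(s)$ at once.

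For the McMillan degree I would compute the degree of the rational curve $K(s)$ in the Pl\"ucker embedding of $G(m,2m)$. The matrix $[D(s)\co N(s)]$ is row-reduced, with row degrees $2m{-}2,2m{-}3,\dots,m{-}1$ summing to $\tfrac{3m(m-1)}{2}$; but it is not left coprime, since at $s=0$ every row collapses onto a single coordinate, so all maximal minors vanish there. Extracting this common factor—which I would verify to be exactly $s^{\binom{m}{2}}$—leaves a curve of degree $\tfrac{3m(m-1)}{2}-\binom{m}{2}=m(m-1)=2\binom{m}{2}$ in $G(m,2m)$, which is the McMillan degree; equivalently, by Theorem~\ref{Th:one}(4), this is a degree-$\binom{m}{2}$ curve in $OG(m)$.

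For the count and reality I would pass to the geometric reformulation established in the preamble: by \eqref{Eq:geometry}--\eqref{Eq:SchubertVariety}, the skew-symmetric laws $F$ placing $s_1,\dots,s_{\binom{m}{2}}$ are exactly the $H=[F\co I_m]\in OG(m)$ with $K(s_i)\cap H\neq\{0\}$ for each $i$. The essential observation is that $K(s)$ is the osculating-plane curve of a rational normal curve rendered isotropic, so this system of incidence conditions is precisely a fiber of the Wronski map for the orthogonal Grassmannian over the Wronskian whose roots are $s_1,\dots,s_{\binom{m}{2}}$. Purbhoo's theorem~\cite{Purbhoo} then applies: when these roots are real and distinct the fiber is reduced, consists of exactly $d_m=\deg OG(m)$ points, and every one is real. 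It remains to check that each such $H$ lies in the dense cell of planes of the form $[F\co I_m]$, so that it genuinely corresponds to a feedback matrix $F$—real because $H$ is real, and skew-symmetric because $H\in OG(m)$—and that reducedness upgrades the intersection-theoretic bound $\deg(c_1^{\binom{m}{2}})=d_m$ to an exact count.

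The hard part will be the identification in step three of the incidence system with a Wronski fiber in the exact normalization Purbhoo requires: one must match the symmetric form, the choice of connected component of $OG(m)$, and the osculating flag of $K(s)$ to his hypotheses, and confirm that real poles $s_i$ land in the locus where his reality conclusion holds and where no solution escapes to the boundary $\det D(s_i)=0$. By contrast, the skew-symmetry and degree statements are largely bookkeeping once the isotropy of $K(s)$ and the common factor $s^{\binom{m}{2}}$ are pinned down.
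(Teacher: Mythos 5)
Your overall architecture matches the paper's: skew-symmetry of $G(s)$ follows from the isotropy of $K(s)$ together with the Section~\ref{S:one} characterization of $OG(m)$ as planes $[I_m\co F]$ with $F$ skew-symmetric, and the count and reality are reduced to Purbhoo's theorem via the incidence conditions~\eqref{Eq:SchubertVariety}. Your McMillan degree computation is a genuinely different route: the paper instead notes that the pole placement map is a linear projection of the Pl\"ucker coordinates whose image (by Purbhoo's surjectivity of the Wronski map onto squares) contains polynomials of degree $2\binom{m}{2}$, so the curve has that degree. Your direct computation can be completed --- column $j$ of $[D(s)\co N(s)]$ is divisible by $s^{2m-1-j}$ for $j\le m$, by $s^{j-m-1}$ for $m<j<2m$, and by $s^{m-1}$ for $j=2m$, so every maximal minor vanishes to order at least $\binom{m}{2}$ at $s=0$; $\det N(s)$ vanishes to exactly that order; and $\det D(s)=\sqrt{2}\,s^{3m(m-1)/2}$ rules out any further common factor and supplies the top degree --- but as written the key gcd claim is only asserted, and this is more work than the paper's one line.

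The substantive gaps are precisely the two items you flag and defer, and they are not routine. First, Purbhoo's theorem lives on the odd orthogonal Grassmannian $OG(n,2n+1)$, not on $OG(m)\subset G(m,2m)$, and the translation is where the paper does real work: it introduces $V\simeq\C^{2m-1}$ spanned by $\be_1,\dotsc,\be_{m-1},(\be_m+(-1)^{m-1}\be_{2m})/\sqrt{2},\be_{m+1},\dotsc,\be_{2m-1}$, observes that $\gamma(s)$ is a rational normal curve in $V$ and that $L(s):=K(s)\cap V$ is the osculating $(m{-}1)$-plane of $\gamma(s^{-1})$, and uses that $H\mapsto H\cap V$ is an isomorphism $OG(m)\to BOG(m{-}1)$ (each maximal isotropic $W\subset V$ has exactly two isotropic extensions to $\C^{2m}$, one in each component, both real when $W$ is real). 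Without this your ``identification with a Wronski fiber'' is a statement of intent rather than a proof. Second, the verification that no solution escapes the cell of planes of the form $[F\co I_m]$ has a specific, non-obvious argument that you should not leave as a remark: since the Wronski map has finite fibers of degree $d_m$, any isotropic $W$ meets at most $\binom{m}{2}$ of the osculating planes $L(s)$, \emph{including} $L(\infty)$; a solution already meeting $L(s_1),\dotsc,L(s_{\binom{m}{2}})$ therefore cannot also meet $L(\infty)$, whence $H\cap K(\infty)=0$ and $H$ is the graph of an honest (real, skew-symmetric) feedback matrix. Supplying these two arguments would close the proof.
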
 
 
%%%%%%%%%%%%%%%%%%%%%%%%%%%%%%%%%%%%%%%%%%%%%%%%%%%%%%%%%%%%%%%%%%%%%%%%%%%%% 
\begin{proof} 
  Since the isotropic $m$-plane $K(s)$ is the the row space of $[I_m\co G(s)]$, we conclude 
  that $G(s)$ is skew-symmetric.  
  Let $V\simeq \C^{2m-1}\subset\C^{2m}$  be the subspace with ordered basis 
\[ 
  (\be_1,\dotsc,\be_{m-1}, 
  (\be_m+(-1)^{m-1}\be_{2m})/\sqrt{2},\be_{m+1},\dotsc,\be_{2m-1})\,. 
\] 
  The nondegenerate symmetric bilinear form on $\C^{2m}$ 
  restricts to a nondegenerate symmetric bilinear form 
  on $V$, and the map $H\mapsto W:=H\cap V$ sends a maximal isotropic subspace $H$ of $\C^{2m}$ 
  to a maximal isotropic subspace of $V$, inducing an isomorphism between $OG(m)$ and 
  the space $BOG(m{-}1)$ of maximal isotropic subspaces of $V\simeq\C^{2m-1}$. 
  The reason for this is that for each $W\in BOG(m{-}1)$ there are two maximal isotropic subspaces 
  $H$ of $\C^{2m}$ containing $W$, exactly one of which lies in $OG(m)$. 
  When $W$ is real, both isotropic subspaces $H$ containing $W$ are also real. 
 
  Also, $\gamma(s)$ is a rational normal curve in $V$, as it involves the monomials 
  $1,\dotsc,s^{2m-1}$.  
  Furthermore, $\DeCo{L(s)}:=K(s)\cap V$ is the $(m{-}1)$-plane osculating 
  $\gamma(s^{-1})$, and $L(s)$ is isotropic.  
 
  The problem of which isotropic subspaces $W$ of $V$ that meet $r = \binom{m}{2}$ 
  osculating planes $L(s_1),\dotsc,L(s_{r})$ was studied by Purbhoo~\cite{Purbhoo} in the 
  context of the Wronski map from $BOG(m{-}1)\simeq OG(m)$, which extends the pole placement map 
  from $[F\co I_m]\mapsto\varphi(s)$ for the transfer function $G(s)$. 
  This map is surjective onto the space of polynomials of degree $2r$ which are 
  squares of polynomials, and it has finite fibers of algebraic degree $d_m$. 
  This implies that there are at most $d_m$ feedback laws placing a given set of 
  $r$ poles. 
  It also implies that any given isotropic plane $H$ meets at most $r$ 
  subspaces of the form $L(s)$, including $L(\infty)=[0\co I_{m-1}]$. 
 
  Purbhoo~\cite[Theorem 3]{Purbhoo} showed that if  
  $s_1,\dotsc,s_{r}$ were real, then there are 
  exactly $d_m$ real isotropic planes $W$ in $BOG(m-1)$ such that  
  $L(s_i)\cap W\neq\{0\}$, for each $i=1,\dotsc,r$. 
  For each such $W$, let $H$ be the unique isotropic plane in $OG(m)$ containing $W$, which 
  is necessarily real. 
  Then $H\cap K(s_i)\neq 0$ for each $i$, and so $H$ corresponds to a real feedback law  
  if $H$ has the form $[I_m\co F]$. 
  But this is guaranteed for otherwise $H\cap K(\infty)\neq 0$, which would imply  
  $W\cap L(\infty)\neq\{0\}$, an impossibility as $H$ already meets the maximum number of 
  subspaces of the form $L(s)$. 
  Lastly, the transfer function has McMillan degree $2r$ since the image of the pole 
  placement map (a linear projection) meets the set of polynomials of this degree. 
\end{proof} 
%%%%%%%%%%%%%%%%%%%%%%%%%%%%%%%%%%%%%%%%%%%%%%%%%%%%%%%%%%%%%%%%%%%%%%%%%%%%% 
 
\providecommand{\bysame}{\leavevmode\hbox to3em{\hrulefill}\thinspace} 
\providecommand{\MR}{\relax\ifhmode\unskip\space\fi MR } 
% \MRhref is called by the amsart/book/proc definition of \MR. 
\providecommand{\MRhref}[2]{% 
  \href{http://www.ams.org/mathscinet-getitem?mr=#1}{#2} 
} 
\providecommand{\href}[2]{#2}

%%%%%%%%%%%%%%%%%%%%%%%%%%%%%%%%%%%%%%%%%%%%%%%%%%%%%%%%%%%%%%%%%%%%%%%%%%%%% 
\end{document}